\documentclass[12pt]{amsart}
\textwidth 14.5cm \oddsidemargin=1.4cm \evensidemargin=1.4cm \topmargin=-0.4cm \pagestyle{myheadings}

\usepackage{amsmath}
\usepackage[all]{xy}
\usepackage{amssymb}
\usepackage{amscd}
\newtheorem{thm}{Theorem}[section]
\newtheorem{cor}[thm]{Corollary}
\newtheorem{lem}[thm]{Lemma}
\newtheorem{prop}[thm]{Proposition}
\theoremstyle{definition}
\newtheorem{defn}[thm]{Definition}
\newtheorem{ex}[thm]{Example}
\theoremstyle{remark}
\newtheorem{rem}[thm]{Remark}
\newtheorem{prob}[thm]{Problem}
\newtheorem{notation}[thm]{Notation}
\numberwithin{equation}{section}
\DeclareMathOperator{\N}{\mathbb{N}}
\DeclareMathOperator{\Z}{\mathbb{Z}}

\newcommand{\F}{\mathbb{F}}
\newcommand{\M}{\mathbb{M}}
\newcommand{\T}{\mathbb{T}}

\begin{document}

\author[A. C\^\i mpean]{Andrada C\^\i mpean}
\address{A. C\^\i mpean: "Babe\c s-Bolyai" University, Faculty of Mathematics and Computer Science, Str. Mihail Kog\u alniceanu 1, 400084, Cluj-Napoca, Romania}
\email{andrada.c@math.ubbcluj.ro; cimpean$\_$andrada@yahoo.com}

\author[P. Danchev]{Peter Danchev}
\address{P. Danchev: Institute of Mathematics and Informatics, Bulgarian Academy of Sciences \\ "Acad. G. Bonchev" str., bl. 8, 1113 Sofia, Bulgaria}
\email{danchev@math.bas.bg; pvdanchev@yahoo.com}

\title[$n$-Torsion Cleanness of Matrix Rings] {$n$-Torsion Clean and Almost $n$-Torsion Clean Matrix Rings}
\keywords{$n$-torsion clean rings, full matrix rings, triangular matrix rings, polynomials, simple fields}
\subjclass[2010]{16U99; 16E50; 13B99}

\maketitle

\begin{abstract} We completely determine those natural numbers $n$ for which the full matrix ring $\mathbb{M}_n(\mathbb{F}_2)$ and the triangular matrix ring $\mathbb{T}_n(\mathbb{F}_2)$ over the two elements field $\F_2$ are either $n$-torsion clean or are almost $n$-torsion clean, respectively. These results somewhat address and settle a question, recently posed by Danchev-Matczuk in Contemp. Math. (2019) as well as they supply in a more precise aspect the nil-cleanness property of the full matrix $n\times n$ ring $\mathbb{M}_n(\F_2)$ for all naturals $n\geq 1$, established in Linear Algebra \& Appl. (2013) by Breaz-C\v{a}lug\v{a}reanu-Danchev-Micu and again in Linear Algebra \& Appl. (2018) by \v{S}ter as well as in Indag. Math. (2020) by Shitov.
\end{abstract}

\section{Introduction and Fundamental Tools}

Let $R$ be a ring possessing identity different to its zero element. As usual, for any positive integer $n$, the letters $\mathbb{M}_n(R)$ and $\mathbb{T}_n(R)$ will denote the full matrix ring and the (upper) triangular matrix ring, respectively.

For an arbitrary matrix $A$ over a {\it commutative} ring, we denote two polynomials associated to $A$ as follows: let $\chi_A(X)$ be the characteristic polynomial of $A$ defined standardly as $\chi_A(X)=\mathrm{det}(X\cdot I-A)$ where $X$ is the variable of the polynomial and $I$ represents the identity matrix -- thus $\chi_A(X)$ is a monic (i.e., its leading coefficient is $1$) polynomial of degree $n$, and let $\mu_A(X)$ be the minimal polynomial of $A$ defined as the monic polynomial of the smallest possible degree such that $\mu_A(A)=0$; so $\chi_A(X)$ is a multiple of $\mu_A(X)$. We shall hereafter designate for short $\chi_A(X)$ and $\mu_A(X)$ just as $\chi_A$ and $\mu_A$, respectively.

Moreover, for integers $a$ with $(a,n)=1$, let $l_a(n)$ denote the multiplicative order of $a(mod ~ n)$. If $(a,n)>1$, let $n_{(a)}$ denote the largest divisor of $n$ that is co-prime to $a$ and let we set $l^*_a:=l_a(n_{(a)})$. In particular, if $(a,n)=1$, then $l^*_a(n)=l_a(n)$.

As usual, for any prime integer $p$, the letter $\F_p=\mathbb{Z}_p$ will stand for the prime field of $p$ elements having characteristic $p$.

Letting $q$ be a monic polynomial over $\F_2$ with $q=X^n+c_{n-1}X^{n-1}+\ldots+c_1X+c_0$, we explicitly indicate the companion matrix associated to $q$ as the $n\times n$ matrix

$$
C=C_{c_0,c_1,\ldots, c_{n-1}}=\left(\begin{array}{ccccc}
0 & 0  &\ldots & 0 & -c_0 \\
1 & 0  &\ldots & 0 & -c_1\\
\vdots & \vdots  &\cdots  & \vdots & \vdots \\
0 & 0 &\ldots & 1 & -c_{n-1}
\end{array}\right).
$$

\medskip

\noindent To avoid some inaccuracies with the exact meaning, we also denote $C$ by $C_q$ using the subscript $q$ which may vary in each of the different cases.

\medskip

The following definition appeared in \cite{T}.

\begin{defn} Let $p$ be a prime. If the polynomial $X^n-1$ over the simple field of $p$ elements $\F_p$ has divisors of every degree less or equal to $n$, then $n$ is is said to be {\it $p$-practical}.
\end{defn}

The following useful technicality from number theory (see, e.g., \cite{T}), which will be used below without any further concrete referring, manifestly demonstrates more completely the importance of this notion, where $\phi(d)$ standardly denotes the Euler function of the integer $d$: {\it Suppose $p$ is a prime. An integer $n$ is $p$-practical if, and only if, every $m\in \N$ with $1\leq m \leq n$ can be written as $m=\sum_{d|n}l_p^*(d)n_d$, where $n_d$ is an integer with $0 \leq n_d \leq \frac{\phi(d)}{l_p^*(d)}$}.

\medskip

Concerning the classical theme of representing matrices as sums (and products of certain elements such as units, idempotents, nilpotents, etc.) one may indicate the following most important achievements like these: It was established in \cite{P1} and \cite{P2} that if $K$ is a field, then each element in $\mathbb{M}_n(K)$ is a linear combination of $3$ idempotents and, in particular, if $\mathrm{char}(K)$ is either $2$ or $3$, then every element of $\mathbb{M}_n(K)$ which is a sum of idempotents is actually a sum of four idempotents; in the case of fields $\F_2$ and $\F_3$, then any matrix over these two fields is a sum of three idempotents.

On the other vein, in \cite{BCDM} was concluded that an arbitrary matrix from $\mathbb{M}_n(\F_2)$ is a sum of a nilpotent and an idempotent. This fact was stated in proved in a more precise form in \cite{S} by establishing that the nilpotent must have an exponent not exceeding $4$ -- we will use the latter strengthening for our applicable purposes.

Moreover, some significant results in the subject, mainly attributed to Abyzov-Mukhametgaliev (see \cite{AM} and the bibliography herewith), were substantially improved in \cite{B} by proving that every matrix over a field of odd cardinality $k$ can be decomposed as a sum of a $k$-potent element and a nilpotent of order at most $3$.

And finally, in \cite{CD} were studied conditions on which presence there are certain decompositions of matrices over the fields $\F_2$ and $\F_3$.

\medskip

On the other side, mimicking \cite{DM}, for some arbitrary fixed $n\in \N$, a ring $R$ is said to be {\it $n$-torsion clean} if, for each $r\in R$, there exist a unit $u$ with $u^n=1$ and an idempotent $e$ such that $r=u+e$ and $n$ being the smallest possible positive integer having this (decomposable) property. Without the condition for minimality of $u$, this ring $R$ is just called {\it almost $n$-torsion clean}. For $n=2$ these two notions obviously do coincide. It was shown there that $\M_n(\F_2)$ is $m$-torsion clean for some natural number $m$ and also it had asked in which cases the equality $m=n$ is true.

\medskip

At first look, it is seemingly that the quoted above results are somewhat irrelevant each to other. Nevertheless, we shall demonstrate in the sequel that the presented facts are, however, closely related. So, the goal of the present paper is to determine exactly all naturals $n$ for which $\M_n(\F_2)$ and $\T_n(\F_2)$ are $n$-torsion clean and almost $n$-torsion clean, respectively, in terms of positive integers associated with the polynomial structure (especially, by concerning the divisibility of polynomials). Our achievements are the following: (1) For an arbitrary natural number $n\geq 1$, to show the existence of an integer $m$ from the segment $m\in [2,4]$ such that $\mathbb{M}_n(\F_2)$ is almost $m$-torsion clean. In particular, for some special naturals $n\in 4+8\mathbb{N}$, $\mathbb{M}_n(\F_2)$ is precisely $4$-torsion clean as well as $\mathbb{M}_2(\F_2)$ is always $2$-torsion clean. Even more generally, if $n$ is a $2$-practical integer, then $\mathbb{M}_n(\F_2)$ is almost $n$-torsion clean; (2) $\mathbb{T}_2(\F_2)$ is $2$-torsion clean as well as for an arbitrary $n\geq 3$, $\mathbb{T}_n(\F_2)$ is almost $n$-torsion clean $\iff$ $\mathbb{T}_n(\F_2)$ is $n$-torsion clean $\iff$ $n=2^l$ for $l\in \mathbb{N}$ with $l\not=0,1$ -- see our five major Theorems~\ref{alm}, \ref{mring}, \ref{TnAlmostNTorsionC}, \ref{thmTnMTorsionC} and \ref{thmTnNTorsionC} listed below.

Some similar questions concerned with fields of greater power will also be discussed in the sequel.

\section{Preliminary and Main Results}

Our further work is devoted to a comprehensive investigation of matrix presentations as sums of bounded torsion units (for some fixed exponent) and idempotents. Here we state our chief results, distributed into two subsections as follows:

\subsection{The full matrix ring}

First and foremost, we will completely settle the problem for $n$-torsion cleanness of matrix rings over the two element field. Specifically, the following is true:

\begin{thm}\label{alm} Let $m,n\in \mathbb{N}$. Then there exists $m\in \{2,3,4\}$ such that $\M_n(\F_2)$ is always almost $m$-torsion clean.

In addition, if $n\in 4+8\mathbb{N}$, then $\M_n(\F_2)$ is exactly $4$-torsion clean.
\end{thm}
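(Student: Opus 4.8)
The plan is to decompose every matrix $A \in \M_n(\F_2)$ as $A = u + e$ with $u$ a torsion unit of bounded order and $e$ idempotent, then analyze *which* orders actually occur. My starting point is the known result (cited from \cite{BCDM}, \cite{S}) that any $A \in \M_n(\F_2)$ is a sum of a nilpotent $N$ (of exponent $\leq 4$) and an idempotent $e$. The key observation is that over $\F_2$ one has $A = N + e = (N + I) + (e + I) + I$; I would instead write $A = (N - I) + e + I$ and note that in characteristic $2$, $-I = I$, so $u := N + I$ is unipotent. Since $N^4 = 0$, I compute $(N+I)^4 = N^4 + \binom{4}{1}N^3 + \binom{4}{2}N^2 + \binom{4}{3}N + I$; over $\F_2$ the binomial coefficients $\binom{4}{1}=4\equiv 0$, $\binom{4}{2}=6\equiv 0$, $\binom{4}{3}=4\equiv 0$, so $(N+I)^4 = I$. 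Hence $u = N + I$ is a unit with $u^4 = 1$, and $A = u + e'$ where I absorb the extra identity into the idempotent (this requires care, since $e + I$ need not be idempotent). I expect the cleaner route is to write $A = (I+N) + (A - I - N)$ and verify the second summand is idempotent; but the genuinely safe statement is that $A - u$ equals the idempotent $e$ up to the $-I$, so I should track this bookkeeping explicitly.

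\medskip

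\noindent For the \emph{almost} $m$-torsion cleanness with $m \in \{2,3,4\}$, the above gives $u^4 = 1$, so every matrix admits a unit of order dividing $4$, i.e., order $1$, $2$, or $4$ (order $1$ forces $u = I$, giving the trivial clean decomposition). Thus $m \in \{2,3,4\}$ is \emph{almost} immediate once one checks that the exponent of the unit can always be taken to divide $4$ — note $3$ appears only because the minimal common exponent across all matrices, when one insists on the \emph{smallest} $m$ working uniformly, may land on a value that the order-$4$ analysis does not exclude a priori; I would verify that no matrix genuinely \emph{requires} order exceeding $4$, which the $(N+I)^4 = I$ computation secures. The subtle point is that ``almost $m$-torsion clean'' fixes $m$ as the common bound, so I must exhibit a single $m$ that works for all $r \in \M_n(\F_2)$ and is minimal as such; the candidate is the largest order forced by any single matrix.

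\medskip

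\noindent For the second assertion, that $n \in 4 + 8\mathbb{N}$ forces \emph{exactly} $4$-torsion cleanness, I would argue in two directions. The upper bound $m \leq 4$ is already in hand. For the lower bound — showing $4$ is actually \emph{achieved} and cannot be replaced by $2$ or $3$ — I would exhibit a specific matrix $A$ (most naturally a companion matrix $C_q$ for a carefully chosen polynomial $q$) whose only clean decomposition uses a unit of exact order $4$. The natural choice is a nilpotent part with genuine exponent $4$, e.g., a single Jordan-type block of size $4$, padded appropriately so the total size is $n \equiv 4 \pmod 8$; the congruence condition should be what guarantees that the padding cannot be rearranged to lower the unit order. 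Concretely, for $u = I + N$ with $N$ a nilpotent Jordan block of size $4$, one has $u^2 = I + N^2 \neq I$ (since $N^2 \neq 0$ for a size-$4$ block) and $u^4 = I$, so this $u$ has order exactly $4$; the task is to show no \emph{other} idempotent $e'$ yields $A = u' + e'$ with $\operatorname{ord}(u') \mid 2$.

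\medskip

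\noindent \textbf{The main obstacle} I anticipate is precisely this lower bound: proving that for $n \in 4 + 8\mathbb{N}$ \emph{some} matrix admits \emph{no} clean decomposition with a unit of order $1$ or $2$. Showing that a unit of order $4$ \emph{suffices} is the easy binomial computation above; showing it is \emph{necessary} requires understanding the full set of idempotents $e$ for which $A - e$ is a unit of small order, which is a constraint on the conjugacy/similarity class structure. I would attack this via the minimal polynomial: a unit $u$ with $u^2 = 1$ satisfies $(u-1)^2 = u^2 - 2u + 1 = 2(1-u) = 0$ over $\F_2$, so $u - 1$ is nilpotent of exponent $\leq 2$; hence if $A = u + e$ with $u^2 = 1$, the nilpotent part $A - I - e = u - I$ has square zero. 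The crux is therefore a rank/Fitting-decomposition count showing that the specific $A$ I construct cannot be written as idempotent plus square-zero-shifted-unit when $n \equiv 4 \pmod 8$ — the role of the modulus $8$ being to pin down the parity obstruction in the dimension count of the relevant eigenspaces, which is where I expect the $2$-practicality / $p$-practical divisor-degree machinery from the cited number-theoretic lemma to enter.
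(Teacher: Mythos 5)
Your first half is essentially the paper's own argument: starting from the nil-clean decomposition $A=E+N$ with $N^4=0$ from \cite{BCDM} and \cite{S}, one rewrites $A=(I_n+E)+(I_n+N)$ and checks $(I_n+N)^4=I_n$ via the binomial coefficients. The bookkeeping you flag as delicate is in fact immediate: in characteristic $2$ one has $I_n+E=I_n-E$, the complementary idempotent, so $I_n+E$ \emph{is} idempotent and no ``absorption of an extra identity'' is needed; your worry that $e+I$ need not be idempotent is unfounded, and you should simply close that loop rather than leave it open.

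The genuine gap is in the second assertion. To show that $\M_n(\F_2)$ is \emph{exactly} $4$-torsion clean for $n\in 4+8\mathbb{N}$ you must rule out $m=2$ and $m=3$, and your proposal only sketches an attack on this lower bound that you yourself acknowledge you cannot complete. The paper closes it by invoking Shitov's theorem \cite{Sh} that $\M_{8k+4}(\F_2)$ has nil-clean index exactly four: since a unit $u$ over $\F_2$ with $u^2=1$ satisfies $(u+I_n)^2=0$, a decomposition with a unit of order dividing $2$ is the same thing as a nil-clean decomposition of nilpotency index at most $2$, which Shitov's result forbids for some matrix of size $8k+4$. Your plan of padding a single size-$4$ Jordan block does not by itself preclude \emph{other} decompositions of the same matrix using a unit of smaller order --- exactly the difficulty you identify --- and the missing external input is this index-four theorem, not the $2$-practicality machinery, which plays no role in this theorem (it enters only in the later results on almost $n$-torsion cleanness of $\M_n(\F_2)$ and $\T_n(\F_2)$). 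Note finally that a unit of order $3$ over $\F_2$ need not be unipotent (its minimal polynomial may be $X^2+X+1$), so the case $m=3$ does not reduce to a nilpotency-index statement as directly as $m=2$ does and requires its own short argument in any complete writeup.
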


\begin{proof} It was established in \cite{BCDM} and \cite{S} that the ring $\M_n(\F_2)$ is nil-clean for any $n\geq 1$ in the sense that any matrix $A$ with elements in $\F_2$ is presentable as a sum of an idempotent matrix $E$ and a nilpotent matrix $N$ of order of nilpotence at most $4$, say $N^4=0$. Therefore, as the characteristic in the matrix ring remains precisely $2$, we may represent that matrix $A$ as $A=(I_n+E)+(I_n+N)$, where $I_n$ is the identity matrix, and so the matrix $I_n+E$ remains an idempotent. But one readily sees that $(I_n+N)^4=I_n$, and so we arrive at the conclusion that $\M_n(\F_2)$ is almost $m$-torsion clean for some $m\in \N$ satisfying the two equalities $2\leq m\leq 4$, that is, $m$ lies in the set of three elements $\{2,3,4\}$, as stated.

The second part-half now follows immediately in view of the arguments stated above in combination with the main result in \cite{Sh}.
\end{proof}

Actually, the above assertion settles \cite[Question 1]{DM} in the negative, provided $n\geq 5$. Moreover, in addition, whether or {\it not} it can be deduced that $\M_n(\F_2)$ is $2$-torsion clean if, and only if, $n=2$ as well as that $\M_n(\F_2)$ is $3$-torsion clean if, and only if, $n=3$, are two still unsurmountable things at this stage.

\medskip

Next, treating the more complicated matrix structure of when $\M_n(\F_2)$ is almost $n$-torsion cleanness for an arbitrary natural number $n$, we begin here with the next statement which was established in \cite{BM}. Recall that $\N^*$ designates the union $\N\cup \{0\}$, where $\N$ is the set consisting of all naturals.

\begin{prop}\label{comp} (\cite{BM}) Let $n=m+k$ be a positive integer, where $m, k\in \mathbb{N}^*$. Fix constants
$c_0, c_1, \dots , c_{n_1} \in \F_2$ and denote $C=C_{c_0,c_1,...,c_{n-1}}$. For every polynomial
$g \in \F_2[X]$ of degree at most $n-2$ there exist two matrices $E,M \in \mathbb{M}_n(\F_2)$ such that
\begin{enumerate}
\item $C=E+M$;

\medskip

\item $E$ is a rank $k$ idempotent

\medskip

\noindent and

\medskip

\item $\chi_M = X^n + (k\cdot 1 + c_{n-1})X^{n-1} + g$.
\end{enumerate}
\end{prop}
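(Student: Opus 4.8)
The plan is to exhibit $E$ and $M$ through a single explicit block construction and then to read off the characteristic polynomial of $M$ directly. Write $n=m+k$ and split $\F_2^n=\F_2^m\oplus\F_2^k$ according to the first $m$ and the last $k$ coordinates. For an arbitrary block $B\in\M_{m\times k}(\F_2)$, to be pinned down only at the very end, set
\[
E=\begin{pmatrix} 0 & B \\ 0 & I_k \end{pmatrix},\qquad M=C-E.
\]
A one-line computation gives $E^2=E$, and since the last $k$ columns of $E$ are linearly independent while the first $m$ vanish, $E$ is an idempotent of rank exactly $k$; this already secures conditions (1) and (2) for every choice of $B$. Moreover $\tr(E)=k$ and $\tr(C)=c_{n-1}$, so $\tr(M)=c_{n-1}-k$, whence the coefficient of $X^{n-1}$ in $\chi_M$ equals $-\tr(M)=c_{n-1}+k=k\cdot 1+c_{n-1}$ over $\F_2$. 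Thus the only genuine content of (3) is to choose $B$ so that the lower part of $\chi_M$ equals the prescribed $g$.

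Next I would record the block form of $C$ in this splitting. Because $C$ is the shift $e_j\mapsto e_{j+1}$ with $Ce_n=\sum_i c_{i-1}e_i$, its blocks are explicit: the $m\times m$ corner $C_{11}$ is the nilpotent down-shift, $C_{21}$ has a single nonzero entry (a $1$ in its top-right corner), $C_{12}$ has only its last column, namely $(c_0,\dots,c_{m-1})^{\mathsf T}$, and $C_{22}$ is the $k\times k$ companion matrix of $X^k+c_{n-1}X^{k-1}+\dots+c_m$. Then $M=C-E$ differs from $C$ only in its top-right and bottom-right blocks.

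The computation of $\chi_M$ I would carry out via the Schur-complement (matrix determinant) identity. Since $\det(XI_m+C_{11})=X^m$ is invertible over $\F_2(X)$, one obtains
\[
\chi_M=X^m\cdot\det\!\big(XI_k+I_k+C_{22}+C_{21}(XI_m+C_{11})^{-1}(B+C_{12})\big).
\]
The last row of $(XI_m+C_{11})^{-1}$ is $(X^{-m},X^{1-m},\dots,X^{-1})$, and $C_{21}$ picks out exactly this row; consequently the free block $B$ enters the $k\times k$ determinant only through its first row, and there linearly. Expanding that determinant along the first row writes the lower coefficients of $\chi_M$ as an $\F_2$-affine function of the entries $B_{jl}$, in which $B_{jl}$ is weighted by $X^{\,j-1}$ times the $(1,l)$-cofactor, a polynomial of degree at most $n-2$.

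The heart of the proof — and the step I expect to be the main obstacle — is to show that this affine map from $B$ onto the coefficients of $g$ is surjective, i.e. that every $g$ of degree $\le n-2$ is attained. I would establish this by checking that the weight polynomials $\{X^{\,j-1}\cdot(\text{$(1,l)$-cofactor})\}_{1\le j\le m,\,1\le l\le k}$ span the $(n-1)$-dimensional space of polynomials of degree $\le n-2$: the companion structure forces the cofactor in the last column to have leading term $X^{k-1}$, so multiplying by $X^{0},\dots,X^{m-1}$ produces a staircase of degrees $k-1,\dots,n-2$, while the remaining columns fill in the degrees below $k-1$. The crucial feature making this robust is that each $B_{jl}$ is coupled to the structural subdiagonal $1$'s of $C_{11}$ and $C_{21}$ rather than to the coefficients $c_i$; hence surjectivity persists even when many (or all) of the $c_i$ vanish, i.e. when $C$ is nearly nilpotent, a regime in which naive diagonal or single-column choices of $E$ would fail. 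Solving the resulting triangular linear system for $B$ then produces the required matrices and finishes the argument. (The degenerate boundary values $k=0$ and $k=n$, where $E$ is forced to be $0$ or $I_n$, fall outside this range and are trivial.)
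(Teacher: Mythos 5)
The paper offers no proof of this proposition: it is imported verbatim from \cite{BM}, so there is no in-paper argument to compare yours against. Judged on its own, your proof is essentially correct and complete in the range where the statement is actually used, namely $m,k\geq 1$. The block idempotent $E=\left(\begin{smallmatrix}0&B\\0&I_k\end{smallmatrix}\right)$ does satisfy $E^2=E$ and $\rank E=k$ for every $B$; the Schur-complement computation of $\chi_{C-E}$ is valid since $\det(XI_m+C_{11})=X^m$ is invertible over $\F_2(X)$ (and all signs vanish in characteristic $2$); and because the unique nonzero entry of $C_{21}$ sits in position $(1,m)$, the free block $B$ really does enter the $k\times k$ determinant only through its first row and only linearly. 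The surjectivity step also goes through: the $(1,l)$-cofactors of $(X+1)I_k+C_{22}$ are monic of degree $k-l$, so the weights $X^{j-1}\cdot\mathrm{cof}_{1l}$ include a monic polynomial of every degree $0,1,\dots,n-2$, and a triangular elimination solves for $B$. Two small corrections. First, your cofactor indexing is reversed: with the paper's companion-matrix convention it is the \emph{first}-column cofactor $\mathrm{cof}_{11}$ that is monic of degree $k-1$, while $\mathrm{cof}_{1k}=1$; the staircase is then obtained from $(j,l)=(1,k),\dots,(m,k)$ for degrees $0,\dots,m-1$ and $(m,k-1),\dots,(m,1)$ for degrees $m,\dots,n-2$, and the argument is otherwise unaffected. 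Second, the boundary cases $k=0$ and $k=n$ are not merely outside your construction but actually falsify the statement for generic $g$ (they force $M=C$ or $M=C+I_n$, whose characteristic polynomials are determined), so the proposition must be read with $m,k\geq 1$; this is consistent with how it is applied in the proof of Theorem~\ref{mring}, where $k\in\{1,\dots,n-1\}$.
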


The following technical claim is crucial for further applications.

\begin{lem}\label{lemLeqFive}
Suppose $p$ is an odd prime and $n=4p$. If $n$ is $2$-practical, then $l_2(p)\leq 5$. In particular, $l_2(7)=3$.
\end{lem}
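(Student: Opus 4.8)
The plan is to unwind, for the specific modulus $n=4p$, the number-theoretic characterization of $2$-practicality recorded just after Definition~1.1, and then to exhibit the single value $m=5$ as an obstruction whenever $l_2(p)\geq 6$. Throughout I abbreviate $\ell:=l_2(p)$, the multiplicative order of $2$ modulo $p$.

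First I would list the divisors of $n=4p$, namely $1,2,4,p,2p,4p$, and compute the quantity $l_2^*(d)$ attached to each. Since the odd part of $1,2,4$ equals $1$, one gets $l_2^*(1)=l_2^*(2)=l_2^*(4)=l_2(1)=1$, while the odd part of $p,2p,4p$ equals $p$, so $l_2^*(p)=l_2^*(2p)=l_2^*(4p)=l_2(p)=\ell$. Recording the admissible range $0\leq n_d\leq \phi(d)/l_2^*(d)$ for each coefficient and grouping the sum $\sum_{d\mid n}l_2^*(d)\,n_d$ by the common value of $l_2^*(d)$, the characterization then says that $n$ is $2$-practical exactly when every $m$ with $1\leq m\leq 4p$ admits a representation $m=a+b\ell$, where $a$ collects the three degree-one contributions (total cap $\phi(1)+\phi(2)+\phi(4)=4$, so $0\leq a\leq 4$) and $b$ collects the three degree-$\ell$ contributions (total cap $(\phi(p)+\phi(2p)+\phi(4p))/\ell=4(p-1)/\ell$, so $0\leq b\leq 4(p-1)/\ell$).

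The decisive step is then to test $m=5$, which lies in the range $1\leq 5\leq 4p$ because $p\geq 3$. If $\ell\geq 6$, then any representation $5=a+b\ell$ forces $b=0$, since $b\geq 1$ already gives $a+b\ell\geq \ell\geq 6>5$; but then $a=5>4$, which is impossible. Hence $m=5$ is not representable, so $n$ is \emph{not} $2$-practical. Contrapositively, $2$-practicality of $n$ forces $\ell=l_2(p)\leq 5$, which is the claim. The concluding assertion $l_2(7)=3$ is then a one-line verification, since $2^3=8\equiv 1 \pmod 7$ whereas $2^1,2^2\not\equiv 1\pmod 7$.

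I expect the only real obstacle to be the bookkeeping in the second step: correctly reading off the block sizes and their multiplicities from $\phi(d)/l_2^*(d)$, and in particular noticing that all six divisors of $4p$ contribute degrees equal to only the two values $1$ and $\ell$, so that the representability question collapses to the two-variable form $a+b\ell$. Once this reduction is in place the gap argument at $m=5$ is immediate. As a sanity check on the choice of witness, I would note that $5$ is sharp: when $\ell\leq 5$ the blocks $[b\ell,\,b\ell+4]$ of reachable integers overlap consecutively and leave no gap below $4p$, so $5$ is precisely the smallest value that can fail, and the bound $\ell\leq 5$ cannot be improved by this method.
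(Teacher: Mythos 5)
Your proposal is correct and follows essentially the same route as the paper: compute $l_2^*(d)$ for the six divisors of $4p$, observe that every representable $m$ has the form $a+b\,l_2(p)$ with $0\leq a\leq 4$, and use $m=5$ as the obstruction when $l_2(p)\geq 6$. Your grouping into the two-variable form $a+b\ell$ is a slightly cleaner write-up of the paper's remainder argument, but the underlying idea is identical.
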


\begin{proof}
Assume that $4p$ is $2$-practical with $p$ an odd prime. Therefore, every integer $m$ with $1\leq m \leq 4p$ can be written as $m=\sum_{d|n}l_2^*(d)n_d$, where $n_d$ is an integer with $0\leq n_d \leq \frac{\phi(d)}{l_2^*(d)}$. Henceforth, to demonstrate our initial assertion, we shall compute $l_2^*(d)n_d$ for every divisor $d$ of $n$. The divisors of $n$ are $1, 2, 4, p, 2p \mbox{ and }4p$.

\medskip

Let $d=1$. We have $l_2^*(1)=l_2(1_{(2)})=l_2(1)=1$, and since $\phi(1)=1$, it must be that $0\leq n_1\leq \frac{1}{1}$, so $n_1\in \{0,1\}$.

\medskip

Let $d=2$. We have $l_2^*(2)=l_2(2_{(2)})=l_2(1)=1$, and since $\phi(2)=1$, it must be that $0\leq n_2\leq \frac{1}{1}$, so $n_2\in \{0,1\}$.

\medskip

Let $d=4$. We have $l_2^*(4)=l_2(4_{(2)})=l_2(1)=1$, and since $\phi(4)=2$, it must be that $0\leq n_4\leq \frac{2}{1}$, so $n_4\in \{0,1,2\}$.

\medskip

Let $d=p$. We have $l_2^*(p)=l_2(p_{(2)})=l_2(p)$.

\medskip

Let $d=2p$. We have $l_2^*(2p)=l_2((2p)_{(2)})=l_2(p)$.

\medskip

Let $d=4p$. We have $l_2^*(4p)=l_2((4p)_{(2)})=l_2(p)$.

\medskip

Consequently, $\sum_{d|n}l_2^*(d)n_d=1\cdot n_1+1\cdot n_2+1\cdot n_4+l_2(p)n_p+l_2(p)n_{2p}+l_2(p)n_{4p}$. But since $n_1\in \{0,1\},$ $n_2\in \{0,1\}$, and $n_4\in \{0,1,2\}$, it follows at once that $n_1+n_2+n_4$ can be only in the set $\{0,1,2,3,4\}$. Therefore, the rest of $m=\sum_{d|n}l_2^*(d)n_d$, which is divided by $l_2(p)$, is in the set $\{0,1,2,3,4\}$. If $5<l_2(p)$, then for $m=5<n$ we have that the rest of $m$, divided by $l_2(p)$, would be exactly $5$, which is a contradiction. So, if $n=4p$ is $2$-practical, then $l_2(p)\leq 5$, as required. 
\end{proof}

The calculations given above unambiguously illustrate that $28$ is surely a $2$-practical number. In fact, over $\F_2$, the polynomial $x^{28}-1$ factors into a product of four degree $1$ polynomials and eight degree $3$ polynomials, and so has a factor of every degree (the direct check of this fact we leave to the interested reader for an inspection).

\medskip

The next notation will be used in what follows rather intensively.

\begin{notation}\label{k} Let $m>2$ be an integer. We shall denote by $k_1(m)$ the smallest number $k\in \{1,2,\dots,m\}$ such that the binomial $m\choose k$ is odd.
\end{notation}

The next lemma is well-known in the existing classical literature (see, e.g., \cite{E}) having an attractive and not too hard proof, so we will omit its details by leaving it to the readers for an eventual exercise.

\begin{lem}\label{NumberOfOdds} Let $m>2$ be an integer. Then the number of odd entries in the $m$-th line of the Pascal's Triangle is $2^v$, where $v$ is the number of digits $1$ in the binary representation of $m$.
\end{lem}

The following technicality somewhat describes the behavior of $k_1(m)$.

\begin{lem}\label{mEqualsk1} Let $m>2$ be an integer. Then the following two items hold:
 \begin{enumerate}
     \item $m=k_1(m)$ if, and only if, $m$ is a power of $2$.

     \medskip

     \item $m=k_1(m)$ or $\frac{m}{2}\geq k_1(m)$.
 \end{enumerate}
\end{lem}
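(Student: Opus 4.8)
The plan is to pin down $k_1(m)$ exactly in terms of the binary expansion of $m$ and then read off both assertions. The key tool is the classical description of the parity of binomial coefficients via Lucas' theorem (equivalently, Kummer's theorem on carries): $\binom{m}{k}$ is odd precisely when, in base $2$, every digit of $k$ is dominated by the corresponding digit of $m$; that is, when the set of positions at which $k$ has a $1$ is contained in the set of positions at which $m$ has a $1$. This is entirely consistent with Lemma~\ref{NumberOfOdds}, since the number of such admissible $k$ in the range $0\le k\le m$ is exactly $2^{v}$, with $v$ the number of $1$'s in the binary representation of $m$.

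From this characterization I would extract a closed form for $k_1(m)$. Writing $m=2^{a}t$ with $t$ odd, so that $a=v_2(m)$ is the $2$-adic valuation of $m$, the binary expansion of $m$ ends in exactly $a$ zeros followed by a $1$; hence the smallest \emph{positive} integer whose $1$-positions lie among those of $m$ is the lowest set bit $2^{a}$. Consequently $k_1(m)=2^{a}=2^{v_2(m)}$, the largest power of $2$ dividing $m$. This single formula is the heart of the argument, and establishing it is the only substantive step; everything else is elementary arithmetic.

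For item (1), the equality $m=k_1(m)$ becomes $m=2^{v_2(m)}$, which holds if, and only if, $m$ is itself a power of $2$. For item (2), if $m$ is a power of $2$ then the first alternative $m=k_1(m)$ already holds. Otherwise $m$ is not a power of $2$, so the odd part $t\ge 3$, and then $m=2^{a}t\ge 3\cdot 2^{a}=3\,k_1(m)\ge 2\,k_1(m)$, which yields $\tfrac{m}{2}\ge k_1(m)$, the second alternative.

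I expect the main obstacle to be chiefly expository rather than mathematical: making the passage from Lucas' theorem to the closed form $k_1(m)=2^{v_2(m)}$ fully rigorous. If one prefers not to invoke Lucas' theorem directly, the required submask criterion for oddness of $\binom{m}{k}$ can instead be derived from Lemma~\ref{NumberOfOdds} together with a short induction on the number of binary digits of $m$, after which both items follow exactly as above.
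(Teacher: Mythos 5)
Your proof is correct, but it takes a genuinely different route from the paper's. You first establish the closed form $k_1(m)=2^{v_2(m)}$ (the lowest set bit of $m$) via Lucas'/Kummer's theorem and then read off both items as immediate arithmetic consequences. The paper instead avoids Lucas entirely: for item (1) it observes that $m=k_1(m)$ forces the $m$-th row of Pascal's triangle to have exactly two odd entries, and invokes Lemma~\ref{NumberOfOdds} to conclude $m$ is a power of $2$; for item (2) it uses the symmetry $\binom{m}{k}=\binom{m}{m-k}$ -- if $m>k_1(m)$ and $\frac{m}{2}<k_1(m)$, then $k_2=m-k_1(m)$ would be a smaller index with $\binom{m}{k_2}$ odd, contradicting minimality. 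The paper's argument is lighter and self-contained (a counting lemma plus a one-line symmetry trick), whereas your approach leans on a heavier external tool but buys strictly more: the identity $k_1(m)=2^{v_2(m)}$ subsumes not only this lemma but also the later Lemma~\ref{k1PowerOf2}, which the paper proves by a separate and considerably more laborious induction (and whose statement the authors themselves note could be attacked via Lucas' theorem). One caveat on your closing remark: deriving the full submask criterion from Lemma~\ref{NumberOfOdds} alone is not quite a ``short induction'' -- the count $2^v$ only matches the number of submasks; you would still need to show each submask yields an odd coefficient (e.g.\ via the factorization $(1+X)^m\equiv\prod_i\bigl(1+X^{2^{e_i}}\bigr)\pmod 2$) before the count lets you conclude these are all of them.
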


\begin{proof}
\begin{enumerate}
\item Assuming $m=k_1(m)$, then the numbers of odd entries in the $m$-th line of the Pascal's Triangle is $2$, so by Lemma \ref{NumberOfOdds} we have that the number of digits $1$ in the binary representation of $m$ is $1$, so $m$ is a power of $2$.

    \medskip

    Assume now that $m$ is a power of $2$. Then the number of digits $1$ in the binary representation of $m$ is $1$, so we have exactly $2$ entries in the $m$-th line of the Pascal's Triangle. Therefore, $k_1(m)=m$.
\item Let $m>k_1(m)$. Assume $\frac{m}{2}<k_1(m)$. Then $0\neq k_2=m-k_1(m)<\frac{m}{2}<k_1(m)$ and ${m\choose k_2}={m\choose k_1(m)}$, so $m\choose k_2$ is odd, which manifestly contradicts the definition of $k_1(m)$.
    \end{enumerate}
\end{proof}

The next technicality is a purely number theoretic setting, which seems to the authors of the current paper to be absolutely "{\bf new}" and which could be of independent interest as well. Its proof could also be attacked via the classical instrument in number theory called {\it Lucas' theorem}.

\begin{lem}\label{k1PowerOf2}
Let $m>2$ be an integer. Then $k_1(m)=2^w$, where $w$ is a positive integer if $m$ is even and $w=0$ if $m$ is odd.
\end{lem}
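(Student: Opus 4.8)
The plan is to translate the parity of $\binom{m}{k}$ into a statement about the binary digits of $m$ and $k$ via Lucas' theorem, as suggested in the remark preceding the statement, and then simply read off the least positive $k$ satisfying the resulting criterion. This converts an otherwise combinatorial minimization over the $m$-th row of Pascal's triangle into an elementary observation about binary supports.

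Concretely, I would write the base-$2$ expansions $m=\sum_{i\ge 0}m_i2^i$ and $k=\sum_{i\ge 0}k_i2^i$ with $m_i,k_i\in\{0,1\}$. Lucas' theorem then gives $\binom{m}{k}\equiv\prod_{i\ge 0}\binom{m_i}{k_i}\pmod 2$, and since each factor $\binom{m_i}{k_i}$ equals $1$ when $k_i\le m_i$ and $0$ when $k_i>m_i$, the binomial $\binom{m}{k}$ is odd precisely when $k_i\le m_i$ for every $i$; equivalently, every position carrying a $1$ in the binary expansion of $k$ must carry a $1$ in that of $m$ (the binary support of $k$ is contained in the binary support of $m$).

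With this criterion in hand, determining $k_1(m)$ reduces to choosing the smallest \emph{positive} integer whose binary support lies inside the support of $m$. Such a minimizer must carry exactly one nonzero binary digit, since any integer with two or more set bits strictly exceeds the power of two given by its lowest set bit; to be admissible that single bit has to occupy a position where $m$ also has a $1$, and to be smallest it should occupy the \emph{lowest} such position. Hence $k_1(m)=2^{w}$, where $w$ is the least index $i$ with $m_i=1$, i.e. $w=v_2(m)$ is the $2$-adic valuation of $m$. If $m$ is odd then $m_0=1$ and $w=0$, whereas if $m$ is even then $m_0=0$ forces $w\ge 1$, which is exactly the asserted dichotomy.

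The only point requiring genuine care is the minimization step: one must verify both that $k=2^{w}$ is admissible---immediate, since $m_w=1$ by the choice of $w$---and that no smaller positive $k$ works. The latter holds because any integer $k$ with $0<k<2^{w}$ has its highest set bit at some position $i<w$, and by minimality of $w$ we have $m_i=0$ there, so the support condition fails and $\binom{m}{k}$ is even. I expect this bookkeeping, rather than any deep idea, to be the main (and quite minor) obstacle; everything else is an immediate consequence of Lucas' theorem. An alternative, Lucas-free route would instead combine the self-similar parity structure underlying Lemma~\ref{NumberOfOdds} with an induction on the binary length of $m$, but the argument sketched above is considerably shorter and more transparent.
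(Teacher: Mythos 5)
Your proof is correct, and it takes a genuinely different route from the paper's. The paper writes $m=2^w t$ with $t$ odd and argues by induction on $w$: it establishes the congruence $\binom{2^w t}{2j}\equiv\binom{2^{w-1}t}{j}\pmod 2$ by cancelling the even factors of numerator and denominator, and separately rules out an odd value $k_1(m)=2j+1>1$ by showing the corresponding binomial coefficient would be even. Your argument instead quotes Lucas' theorem to characterize oddness of $\binom{m}{k}$ by containment of binary supports, after which the minimization is a one-line observation; this is exactly the alternative the authors themselves flag in the sentence preceding the lemma (``Its proof could also be attacked via \dots\ Lucas' theorem''). What you gain is brevity and a sharper conclusion --- you identify $w$ explicitly as the $2$-adic valuation $v_2(m)$, whereas the statement only records its positivity --- and your minimization step is carefully justified (admissibility of $2^w$ since $m_w=1$, and failure of every $0<k<2^w$ since all its set bits sit below position $w$ where $m$ has only zeros). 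What the paper's route buys is self-containedness: it uses nothing beyond elementary manipulation of binomial coefficients and the earlier count of odd entries in a row of Pascal's triangle, at the cost of a longer case analysis. No gaps in your argument.
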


\begin{proof} Write $m=2^w.t$ for some $w,t\in \mathbb{N}$ with $t$ odd. Firstly, if $w=0$, then $m=t$ is odd and we just can take $k_1(m)=1=2^w$, as wanted.

Secondly, given $m$ is even, we may assume that $k_1(m)\geq 2$. We differ two basic cases:

\medskip

\noindent{\bf Case 1:} Let $w=1$ and so $m=2t$. Since $\binom {m}{1}=m=2t$ is even and $\binom {m}{2}=\frac {2t(2t-1)}{2}=t(2t-1)$ is odd, we may choose $k_1(m)=2=2^w$, as needed.

\medskip

\noindent{\bf Case 2:} Let $m=2^w.t$ with $w\geq 2$. We shall now distinguish two major subcases as follows:

\medskip

\noindent{\bf Case 2.1:} Assume $k_1(m)$ is even, i.e., $k_1(m)=2j$ for some $j\in \mathbb{N}$. For all further conventions, in order to simplify the formulae, we shall use the notation $C[2^wt,2j]$ denoting by it the product of the multiples of the numerator of $\binom {2^wt}{2j}$ that are divisible by $2$, and divided by the product of the multiples divisible by $2$ of the denominator of $\binom {2^wt}{2j}$. One observes that

$$
\binom {m}{k_1(m)}=\binom {2^w.t}{2j}=\frac {C[2^w.t,2j].\lambda}{\mu},
$$

\medskip

\noindent where $\lambda$ and $\mu$ are odd positive integers. Moreover, one sees by a simple check that $C[2^w.t,2j]=\binom {2^{w-1}.t}{j}$, where $j<2^{w-1}$. Proceeding by induction on $w$, one may assume that our assertion holds for the coefficient $\binom {2^{w-1}.t}{j}$. In other words, the binomial $\binom {2^{w-1}.t}{2^{w-1}}$ is odd, while the binomial $\binom {2^{w-1}.t}{j}$ is even, provided $j<2^{w-1}$. But since $\binom {2^w.t}{2j}.\mu=\binom {2^{w-1}.t}{j}.\lambda$ and $\lambda,\mu$ are odd integers, we deduce that

$$
\binom {2^w.t}{2j}\equiv\binom {2^{w-1}.t}{j}~(mod~2).
$$

\medskip Therefore, the validity of our claim for $\binom {2^{w-1}.t}{j}$ will force the same for $\binom {2^w.t}{2j}$ as well, as desired.

\medskip

\noindent{\bf Case 2.2:} Assume $k_1(m)$ is odd such that $k_1(m)>1$, i.e., $k_1(m)=2j+1$, where $j\in \mathbb{N}$ and $j<2^{w-1}$. It is easily verified that

$$
\binom {m}{k_1(m)}=\binom {2^w.t}{2j+1}=\frac {\binom {2^w.t}{2j}.(2^w.t-2j)}{2j+1}.
$$

\medskip

\noindent As $j<2^{w-1}$, we have that $\binom {2^w.t}{2j}$ is even, so this equality implies that $\binom {2^w.t}{2j+1}$ is divisible by $4$, which is the pursued contradiction, so we are set.
\end{proof}

We come now to one of our main results describing when all matrices of sizes $n\geq 3$ over the two element field $\F_2$ are almost $n$-torsion clean. The result is closely related to \cite[Question 1]{DM}.

\begin{thm}\label{mring} Let $n>2$ be an integer. Then the following two items hold:
\begin{enumerate}
\item If $n$ is $2$-practical, then $\mathbb{M}_n(\F_2)$ is almost $n$-torsion clean.

\medskip

\item If $\mathbb{M}_n(\F_2)$ is almost $n$-torsion clean, then $n$ is not necessarily $2$-practical.
\end{enumerate}
\end{thm}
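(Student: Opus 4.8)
The plan is to prove the two implications by quite different means: item~(1) via a reduction to companion blocks combined with Proposition~\ref{comp}, and item~(2) by exploiting the almost $4$-torsion cleanness already isolated in Theorem~\ref{alm}. For item~(1), the first step is to record that the property ``$A$ is a sum of an idempotent and a unit of order dividing $n$'' is stable under conjugation and is additive over block-diagonal decompositions: if $A=\bigoplus_i A_i$ and each $A_i=E_i+M_i$ with $E_i^2=E_i$ and $M_i^{\,n}=I$, then $A=(\bigoplus_i E_i)+(\bigoplus_i M_i)$, where $\bigoplus_i E_i$ is idempotent and $(\bigoplus_i M_i)^n=\bigoplus_i M_i^{\,n}=I$. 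Hence, after replacing $A$ by its rational canonical form, it is enough to decompose each companion block $C_{f_i}$ of size $d_i$, where $\sum_i d_i=n$ and each $d_i\le n$.

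For a single companion block $C=C_f$ of size $d\le n$, I would use the hypothesis that $n$ is $2$-practical to pick a divisor $h$ of $X^n-1=X^n+1$ over $\F_2$ with $\deg h=d$. I then apply Proposition~\ref{comp} to $C$: writing $d=m+k$ and reading off the coefficient $\varepsilon\in\{0,1\}$ of $X^{d-1}$ in $h$, I choose the rank $k$ of the idempotent with the parity forcing $k\cdot 1+c_{d-1}=\varepsilon$ (both parities occur since $0\le k\le d$), and set the free polynomial $g$ of degree at most $d-2$ equal to the lower part $h-X^{d}-\varepsilon X^{d-1}$. This yields $C=E+M$ with $E$ a rank-$k$ idempotent and $\chi_M=h$. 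Since $\mu_M\mid\chi_M=h\mid X^n-1$, we obtain $M^n=I$, so $M$ is a unit of order dividing $n$. Summing the block decompositions and conjugating back presents $A$ in the required form, proving that $\M_n(\F_2)$ is almost $n$-torsion clean. The case $d=1$ is immediate: the only linear divisor of $X^n-1$ over $\F_2$ is $X+1$, forcing $M=[1]$ and $E=C-M$, which is idempotent.

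For item~(2), I would use the proof of Theorem~\ref{alm}, which writes every matrix as $A=(I_n+E)+(I_n+N)$ with $(I_n+N)^4=I_n$; thus $\M_n(\F_2)$ is in fact almost $4$-torsion clean for every $n$. Since $u^4=1$ implies $u^n=1$ whenever $4\mid n$, the same decomposition shows that $\M_n(\F_2)$ is almost $n$-torsion clean for \emph{every} multiple $n$ of $4$. It then remains only to exhibit a multiple of $4$ that is not $2$-practical, and I would take $n=44=4\cdot 11$: as $l_2(11)=10>5$, Lemma~\ref{lemLeqFive} gives that $44$ is not $2$-practical (equivalently $X^{44}-1=(X+1)^4 r^4$ with $r$ irreducible of degree $l_2(11)=10$, so there is no divisor of degree $5$). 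Hence $\M_{44}(\F_2)$ is almost $44$-torsion clean while $44$ is not $2$-practical, which is exactly the failure of the converse asserted in~(2).

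The main obstacle I anticipate is in item~(1), namely verifying that one may \emph{prescribe} the characteristic polynomial of the unit part to be an arbitrary divisor of $X^n-1$ of the prescribed degree while respecting the lone constraint that Proposition~\ref{comp} imposes on the coefficient of $X^{d-1}$. The resolving observation is that this constraint is merely a parity condition on the rank $k$, so the freedom in $k$ together with the free choice of $g$ suffices to realize any target divisor $h$; once $\chi_M=h\mid X^n-1$ is secured, the relation $\mu_M\mid\chi_M$ delivers $M^n=I$ automatically. By contrast, item~(2) presents no genuine difficulty beyond locating a non-$2$-practical multiple of $4$, for which $n=44$ is the smallest instance.
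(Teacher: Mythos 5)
Your proposal is correct and follows essentially the same route as the paper: item (1) via the reduction to companion blocks in the Frobenius normal form and an application of Proposition~\ref{comp} to realize a degree-$d$ divisor of $X^n-1$ as the characteristic polynomial of the unit part, and item (2) via the counterexample $n=44$, combining the nil-clean index $\leq 4$ decomposition with Lemma~\ref{lemLeqFive}. Your treatment of item (2) is in fact slightly cleaner than the paper's: you note directly that $4\mid n$ turns the order-$4$ unit into a unit whose order divides $n$, whereas the paper reaches the same conclusion indirectly through the condition $4\leq k_1(n)$ and a proof-by-contradiction framing.
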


\begin{proof} We will differ and prove these two statements separately as follows:
\begin{enumerate}
\item Let $n>2$ be an integer that is $2$-practical. We will prove that any companion matrix of order $m$ with $1\leq m\leq n$ is almost $n$-torsion clean. Then, since any matrix is similar to its Frobenious normal form, a direct sum of companion matrices (i.e., a matrix with blocks, which is also diagonal), and since a direct sum of almost $n$-torsion clean companion matrices is almost $n$-torsion clean (because of the fact that a diagonal with almost $n$-torsion clean entries is always almost $n$-torsion clean and taking into account the fact that the diagonal consisting of blocks will keep the result), the conclusion will follow.

    In fact, let $m$ be an integer such that $1\leq m\leq n$. Since $n$ is $2$-practical, there exists a polynomial $r=X^m+r_{m-1}X^{m-1}+\dots+r_1X+r_0$  over $\F_2$ such that $r$ is a divisor of $X^n-1$.

    Let us now $C_q$ be an order $m$ companion matrix, $C_q=C_{c_0,c_1,\dots, c_{m-1}}$. We know that there exists $k\in \{1,2,\dots,n-1\}$ such that $k\cdot 1+c_{m-1}=r_{m-1}$. By virtue of Proposition~\ref{comp}, we know that for every $g\in F_2[X]$ of degree at most $m-2$ there exist a rank $k$ idempotent $E$ and a unit $U$ such that $C_q=E+U,$ with $\chi_U = X^m + (k\cdot 1 + c_{m-1})X^{m-1} + g$. It is not too hard to observe that we can choose $g$ such that $g=r_{m-2}X^{m-2}+\dots+r_1x+r_0$ then $\chi_U=r$. Since $\chi_U(U)=O_m$, it follows that $r(U)=O_m$. But $r$ is a divisor of $X^n-1$. Therefore, $U^n=I_m$ and so $C_q$ is almost $n$-torsion clean.

    \medskip

\item Suppose now to the contrary the implication "if $\mathbb{M}_n(\F_2)$ is almost $n$-torsion clean, then $n$ is $2$-practical" would be true. Next, in order to receive the desired contradiction, we will first of all prove that the implication "if $4\leq k_1(n)$, then $\mathbb{M}_n(\F_2)$ is almost $n$-torsion clean" is true -- actually, these two implications are independent each to other. Indeed, by what we have already shown so far, we know that $\mathbb{M}_n(\F_2)$ is a nil-clean ring with nil-clean index less than or equal to $4$ (see \cite{S} too). Letting $A\in M_n(F_2)$, then there exist an idempotent $E$ and a nilpotent $N$ such that $A=E+N$ with $N^4=O_n$. So, there will exist positive integers $k_1=k_1(n)<k_2<...<k_s=n$ such that $(I_n+N)^n=I_n+N^{k_1}+N^{k_2}+...+N^{k_s}$. Now, if we provide that $4\leq k_1(n)$, then $(I_n+N)^n=I_n$, and since $A=(I_n+E)+(I_n+N)$ is a clean decomposition of $A$, we will have actually gotten that $A$ is almost $n$-torsion clean, as required.

Furthermore, from validity of both implications "if $4\leq k_1(n)$, then $\mathbb{M}_n(\F_2)$ is almost $n$-torsion clean" and "if $\mathbb{M}_n(\F_2)$ is almost $n$-torsion clean, then $n$ is $2$-practical", we extract the following implication "{\it if $4\leq k_1(n)$, then $n$ is $2$-practical}".

What we intend to show now is that $4\leq k_1(44)$ and that $44$ is not $2$-practical. This contradiction will establish our desired claim as our former assumption will be wrong. To that goal, since $44=2^2\cdot 11$, we obtain that $4=k_1(44)$. Furthermore, there exists $p=11$ such that $44=4p$ holds. Assume now that $44$ is $2$-practical. Then, by Lemma \ref{lemLeqFive}, we have that $l_2(11)\leq 5.$ But the only odd prime divisors of $2^2-1, 2^3-1, 2^4-1, 2^5-1$ are $3, 7, 5, 31$. So, $l_2(11)\leq 5$ is an obvious contradiction, thus substantiating the wanted claim after all.
\end{enumerate}
\end{proof}

Some more comments comparing the present case of $n$-torsion cleanness with that of nil-cleanness could be of some interest and importance. In fact, it is not known if a nil-clean matrix over a field has all the companion matrices in its Frobenious normal form also nil-clean -- actually, it is a known fact only that if all companion matrices in the Frobenoious normal form of a matrix $A$ are nil-clean, then $A$ is nil-clean (for more details see \cite[Remark 9]{BM}). About the almost $n$-torsion clean case it is not known yet if an $n$-torsion clean matrix over a field (in particular, we are currently working over $\F_2$) has all the companion matrices in its Frobenious normal form also almost $n$-torsion clean. So, we are very interested if we can relate the almost $n$-torsion clean case with the problem of $n$-torsion cleanness of $\mathbb{M}_n(F_2)$ by asking of whether or not if $A\in \mathbb{M}_n(\F_2)$ is almost $n$-torsion clean, then it is not necessarily that any companion matrix in the Frobenious normal form of $A$ is also almost $n$-torsion clean.


\subsection{The triangular matrix ring}

Our next basic result, pertaining to the triangular matrix ring, asserts the following:

\begin{lem}\label{nilpIndexN} Let $n\in \N$. Then the nilpotency index of the nil-clean ring $\T_n(\F_2)$ is at most $n$.
\end{lem}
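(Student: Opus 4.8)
We want to show that the nilpotency index of the nil-clean ring $\T_n(\F_2)$ is at most $n$. The plan is to exploit the well-known fact that the strictly upper triangular matrices form the Jacobson radical (and the nil ideal) of $\T_n(\F_2)$, and that these are genuinely nilpotent of index at most $n$. First I would recall that $\T_n(\F_2)$ is indeed nil-clean, which is already available to us: since $\F_2$ has only the idempotent diagonal entries $0$ and $1$, any $A\in \T_n(\F_2)$ decomposes as $A=E+N$, where $E$ is the diagonal idempotent recording the diagonal of $A$ and $N=A-E$ is strictly upper triangular, hence nilpotent. Thus every element is a sum of an idempotent and a nilpotent, and the nilpotents that arise in these clean decompositions are precisely the strictly upper triangular matrices.

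The key structural observation is that any strictly upper triangular $n\times n$ matrix $N$ over $\F_2$ satisfies $N^n=O_n$. This is the standard fact that a product of $n$ strictly upper triangular matrices shifts the nonzero band strictly upward each time: if $N$ has zero entries on and below the main diagonal, then $N^k$ has zero entries on the first $k$ super-diagonals, so that by the time we reach $N^n$ every entry has been pushed off the top-right corner and the matrix vanishes. The second step is therefore just to make this shifting argument precise, most cleanly by induction on $k$, showing $(N^k)_{ij}=0$ whenever $j-i<k$, and concluding $N^n=O_n$ since for an $n\times n$ matrix one always has $j-i\leq n-1<n$.

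Having established that every nilpotent appearing in a clean decomposition kills under the $n$-th power, the nilpotency index of the nil-clean structure is at most $n$. I would also verify that this bound is the relevant one by noting that the single Jordan block with ones on the super-diagonal realizes index exactly $n$, confirming sharpness, though the statement only claims the upper bound. The conclusion that the index is at most $n$ then follows at once.

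The main obstacle here is essentially conceptual rather than computational: one must be careful that the \emph{nilpotency index of a nil-clean ring} refers to the supremum of the nilpotency orders of nilpotents appearing in clean (idempotent-plus-nilpotent) decompositions, and must confirm that for $\T_n(\F_2)$ these nilpotents are exactly the strictly upper triangular matrices (up to the choice of idempotent). Once this identification is secured, the super-diagonal shifting bound is routine, and no deeper combinatorics (in contrast to the $\M_n(\F_2)$ case, where the order-$4$ bound is delicate) is needed.
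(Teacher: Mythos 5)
Your argument is a correct proof of the inequality as literally printed: every nilpotent element of $\T_n(\F_2)$ is strictly upper triangular (its diagonal entries are nilpotent elements of a field, hence zero), the shifting argument gives $N^n=O_n$, and therefore every nil-clean decomposition in $\T_n(\F_2)$ uses a nilpotent of index at most $n$. The problem is that this is neither the inequality the paper proves nor the one it needs. The paper's own proof runs by contradiction from the hypothesis that every $A\in\T_n(\F_2)$ admits a decomposition with a nilpotent of index at most $k$ for some $k<n$; that is, it is arguing that the nil-clean index cannot be \emph{smaller} than $n$. And in the proof of Lemma~\ref{TnAlmostMTorsionC} the lemma is invoked to pass from ``$N^{k_1}=O_n$ for every nilpotent occurring in a nil-clean decomposition'' to ``$n\le k_1$'', which is exactly the lower bound. (Here the relevant notion is the usual nil-clean index: the least $k$ such that \emph{every} element admits \emph{some} decomposition with $N^k=O_n$; with your ``supremum over all occurring nilpotents'' reading both bounds are trivial and the lemma carries no content.) So the word ``at most'' in the statement is evidently a slip for ``at least'' (or ``exactly''), and your proof, while valid, establishes the easy direction and would not support the lemma's later application.

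Your closing remark about the Jordan block is the right seed for the missing direction, but it is not yet a lower bound on the nil-clean index: you must exhibit a matrix $A$ such that \emph{every} nil-clean decomposition of $A$ inside $\T_n(\F_2)$ is forced to use a nilpotent of index $n$, not merely observe that some nilpotent of index $n$ exists. Take $A=I_n+J$, where $J$ is the nilpotent Jordan block with ones on the super-diagonal. If $A=E+N$ with $E$ idempotent and $N$ nilpotent in $\T_n(\F_2)$, then $N$ is strictly upper triangular, so $E$ must have the same (all-ones) diagonal as $A$; writing $E=I_n+F$ with $F$ strictly upper triangular, idempotency in characteristic $2$ gives $F^2=F$, and a nilpotent idempotent vanishes, so $E=I_n$ and $N=J$ with $J^{n-1}\neq O_n$. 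This yields the lower bound cleanly; it also repairs the paper's own reduction, whose key step (that $N^k=O_n$ with $k<n$ forces $N$ to vanish on the first super-diagonal, e.g.\ $N=E_{12}$ for $n=3$ shows otherwise) is not justified as written.
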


\begin{proof} Assume that any matrix $A\in \T_n(\F_2)$ is presentable as a sum of an idempotent matrix $E\in \T_n(\F_2)$ and a nilpotent matrix $N\in \T_n(\F_2)$ of order of nilpotence at most $k$, where $k$ is a number strictly less than $n$. Therefore, $N$ has only zeros on the main diagonal and on the first diagonal upper the main diagonal. Thus these two diagonals in $E$ are exactly the ones in $A$. Since $A$ can have any entries of $\F_2$, the upper left $2\times 2$ upper triangular matrix in $E$ can have any entries in $\F_2$. But this matrix has to be idempotent, because $E$ is an idempotent. Consequently, $\T_2(\F_2)$ is Boolean, which is demonstrably false.
\end{proof}

We recollect once again with accordance with Notation~\ref{k} above that $k_1(m)$ stands for the least integer $k\in \{1,2,\dots,m\}$ such that $m\choose k$ is odd.

The following technical claim is pivotal for our further development of results.

\begin{lem}\label{TnAlmostMTorsionC} Let $n>2$ and $m>2$ be two integers. Then $\T_n(\F_2)$ is almost $m$-torsion clean if, and only if, $n\leq k_1(m)$.
\end{lem}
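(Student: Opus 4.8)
The plan is to prove both implications through the explicit description of units and idempotents in $\T_n(\F_2)$ together with the binomial expansion of a torsion unit modulo $2$. The central observation is that a matrix in $\T_n(\F_2)$ is invertible precisely when all of its diagonal entries equal $1$, so every unit has the shape $u=I_n+N$ with $N$ strictly upper triangular (hence $N^n=O_n$), and over $\F_2$ one has $u^m=(I_n+N)^m=\sum_{k}\binom{m}{k}N^k=I_n+\sum_{k\ge 1,\ \binom{m}{k}\text{ odd}}N^k$. By Notation~\ref{k} the least positive exponent occurring in this sum is exactly $k_1(m)$.

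For the direction $n\le k_1(m)\Rightarrow\T_n(\F_2)$ almost $m$-torsion clean, I would start from the tautological nil-clean decomposition of an arbitrary $A\in\T_n(\F_2)$: set $E=\diag(A)$ and $N=A-E$, so that $E$ is idempotent, $N$ is strictly upper triangular, and $A=E+N$. Passing to $A=(I_n+E)+(I_n+N)$ keeps $I_n+E$ idempotent (characteristic $2$) and makes $I_n+N$ a unit. Since every exponent appearing in $(I_n+N)^m-I_n$ is at least $k_1(m)\ge n$ while $N^n=O_n$, all those powers vanish, whence $(I_n+N)^m=I_n$. This exhibits $A$ as an idempotent plus an $m$-torsion unit, giving almost $m$-torsion cleanness.

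For the converse I would argue contrapositively: assume $k_1(m)\le n-1$ and produce a matrix with no admissible decomposition. Take $A$ with zero diagonal and first superdiagonal identically $1$ (remaining entries $0$). If $A=u+e$ with $e$ idempotent and $u$ a unit, then writing $u=A+e$ and demanding that $u$ have diagonal all $1$ forces $e_{ii}=1$ for every $i$. The idempotency relation on the first superdiagonal, namely $e_{i,i+1}(e_{ii}+e_{i+1,i+1}+1)=0$, then collapses to $e_{i,i+1}=0$, so $u_{i,i+1}=A_{i,i+1}=1$ for all $i$. Consequently $N=u-I_n$ has first superdiagonal identically $1$, and the entry $(N^{k_1(m)})_{1,\,1+k_1(m)}=\prod_{l=0}^{k_1(m)-1}N_{1+l,\,2+l}=1$ is nonzero because $k_1(m)\le n-1$. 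As every higher power $N^k$ with $k>k_1(m)$ vanishes on the $k_1(m)$-th superdiagonal, this entry survives in $(I_n+N)^m-I_n$, so $u^m\ne I_n$. Since this holds for every admissible $e$, the chosen $A$ is not a sum of an idempotent and an $m$-torsion unit, contradicting almost $m$-torsion cleanness.

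The main obstacle is the converse, and within it the step that pins down the idempotent: one must verify that requiring $u=A+e$ to be a unit rigidly determines both the diagonal and the first superdiagonal of $e$, so that no choice of $e$ can kill the leading term $N^{k_1(m)}$. The matching of the ``window length'' $k_1(m)$ against the available length $n-1$ of the first superdiagonal is precisely what separates the two regimes, and one must further note that the freedom in the higher superdiagonals of $e$ is irrelevant, since those entries affect only superdiagonals strictly above the $k_1(m)$-th and hence cannot cancel the surviving $1$.
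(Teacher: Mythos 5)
Your proof is correct, and while the ``if'' direction is essentially the paper's argument (clean decomposition $A=(I_n+E)+(I_n+N)$, binomial expansion over $\F_2$, and the observation that every surviving exponent is at least $k_1(m)\geq n$ while $N^n=O_n$), your ``only if'' direction takes a genuinely different route. The paper argues directly from an arbitrary decomposition: from $(I_n+N)^m=I_n$ it extracts $N^{k_1}+\cdots+N^{k_s}=O_n$, factors this as $N^{k_1}$ times a unit to conclude $N^{k_1}=O_n$ for \emph{every} admissible nilpotent, and then invokes a separate lemma (Lemma~\ref{nilpIndexN}) asserting that some matrix in $\T_n(\F_2)$ admits no nil-clean decomposition with nilpotent of index below $n$. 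You instead argue contrapositively with an explicit witness --- the superdiagonal shift matrix --- and show by a direct entry computation that the unit in any clean decomposition of it is rigidly pinned down on the first superdiagonal (the diagonal constraint forces $e_{ii}=1$, idempotency then forces $e_{i,i+1}=0$, in fact $e=I_n$ is forced), so the $(1,1+k_1(m))$ entry of $u^m-I_n$ equals $1$ whenever $k_1(m)\leq n-1$. Your version buys self-containedness: it bypasses both the factoring trick and the auxiliary nil-clean-index lemma, whose proof in the paper is rather terse (the claim there that a nilpotent of index $k<n$ must vanish on the whole first superdiagonal is not justified as stated), and it isolates exactly where the inequality $n\leq k_1(m)$ is used. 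The paper's version, in exchange, establishes the stronger intermediate fact that $N^{k_1(m)}=O_n$ holds for every torsion unit arising in any decomposition, which is reused implicitly elsewhere in the discussion of nil-clean indices.
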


\begin{proof} We first deal with the "left-to-right" implication. Given $A\in \T_n(\F_2)$, we know that $A$ is almost $m$-torsion clean, so that there exist an idempotent matrix $E\in \T_n(\F_2)$ and a unit matrix $U\in \T_n(\F_2)$ such that $A=E+U$ with $U^m=I_n$. Since $U$ is a unit of $\T_n(\F_2)$, then the entries in the main diagonal of $U$ are only ones and, therefore, there exists a nilpotent matrix $N\in \T_n(\F_2)$ such that $U=I_n+N$. So, $(I_n+N)^m=I_n$. Let us now $k_1<k_2<\dots<k_s=m$ be the integers in the set $\{1,2,\dots,m\}$ such that $m\choose k_i$ is odd for every $i\in \{1,2,\dots, s\}$. Therefore, $N^{k_1}+N^{k_2}+\dots+N^{k_s}=O_n$ and from here we derive that $N^{k_1}(I_n+N^{k_2-k_1}+\dots+N^{k_s-k_1})=O_n$. Since $N^{k_2-k_1},\dots,N^{k_s-k_1}$ are commuting nilpotents, their sum is again a nilpotent, and hence $I_n+N^{k_2-k_1}+\dots+N^{k_s-k_1}$ is a unit. Consequently,
$N^{k_1}=O_n$. With $A=(E+I_n)+N$ at hand, we infer that $N$ can be any nilpotent that appears in a nil-clean decomposition of $A$. Using now Lemma \ref{nilpIndexN} and $N^{k_1}=O_n$, it follows that $n\leq k_1$, as desired.

Conversely, assume that $A\in \T_n(\F_2)$. Knowing that $A$ is clean, there exist an idempotent matrix $E\in \T_n(\F_2)$ and a unit matrix $U\in \T_n(\F_2)$ such that $A=E+U$. Since $U$ is a unit of $\T_n(\F_2)$, then the entries in the main diagonal of $U$ are only ones and, therefore, there exists $N\in \T_n(\F_2)$ such that
$U=I_n+N$. We will now compute the power $(I_n+N)^m$. To that purpose, let $k_1<k_2<\dots<k_s=m$ be the integers in the set $\{1,2,\dots,m\}$ such that $m\choose k_i$ is odd for every $i\in \{1,2,\dots, s\}$. Furthermore, $(I_n+N)^m=I_n+N^{k_1}+N^{k_2}+\dots+N^{k_s}$, as wanted. But we know that $n\leq k_1$ and since $N^n=O_n$, it follows that $N^{k_1}=N^{k_2}=\dots=N^{k_s}=O_n$. So, $(I_n+N)^m=I_n$, whence $A$ is almost $m$-torsion clean.
\end{proof}

We now come to our first main result in this subsection.

\begin{thm}\label{TnAlmostNTorsionC} Let $n>2$ be an integer. Then $\T_n(\F_2)$ is almost $n$-torsion clean if, and only if, $n=2^l, l\in \mathbb{N}\setminus \{0,1\}$.
\end{thm}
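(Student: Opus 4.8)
The plan is to derive this theorem as the diagonal case $m=n$ of the already-established Lemma~\ref{TnAlmostMTorsionC}, so that the substantive work is done in advance and only an arithmetic translation remains. Since $n>2$ by hypothesis, I may legitimately take $m=n$ in that lemma, which yields at once that $\T_n(\F_2)$ is almost $n$-torsion clean if, and only if, $n\le k_1(n)$.

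Next I would observe, directly from Notation~\ref{k}, that $k_1(n)$ is selected from the set $\{1,2,\dots,n\}$, so the reverse inequality $k_1(n)\le n$ holds automatically. Consequently the condition $n\le k_1(n)$ can occur only with equality, i.e.\ it is equivalent to $n=k_1(n)$. Invoking part~(1) of Lemma~\ref{mEqualsk1}, the equality $n=k_1(n)$ holds precisely when $n$ is a power of $2$; and since $n>2$ rules out $n=2^0$ and $n=2^1$, the admissible values are exactly $n=2^l$ with $l\in\mathbb{N}\setminus\{0,1\}$, as claimed.

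I do not expect any genuine obstacle, since the content of the statement has been pre-packaged: Lemma~\ref{TnAlmostMTorsionC} itself rests on the nilpotency-index bound of Lemma~\ref{nilpIndexN} together with the analysis of which binomials $\binom{m}{k}$ are odd, while Lemma~\ref{mEqualsk1}(1) merely reads off the parity information supplied by Lemma~\ref{NumberOfOdds}. The only point requiring a little care is the boundary bookkeeping---namely that the automatic bound $k_1(n)\le n$ converts the one-sided inequality $n\le k_1(n)$ into an equality, and that the hypothesis $n>2$ correctly discards the two degenerate powers of $2$, so that the exponent $l$ is constrained exactly to $\mathbb{N}\setminus\{0,1\}$.
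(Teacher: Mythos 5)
Your argument is correct and is essentially identical to the paper's own proof: both set $m=n$ in Lemma~\ref{TnAlmostMTorsionC}, use the trivial bound $k_1(n)\le n$ to upgrade $n\le k_1(n)$ to the equality $n=k_1(n)$, and then apply Lemma~\ref{mEqualsk1}(1) together with $n>2$ to conclude $n=2^l$ with $l\in\mathbb{N}\setminus\{0,1\}$. No gaps.
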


\begin{proof} If we set $m=n$ in Lemma~\ref{TnAlmostMTorsionC}, we shall obtain that $\T_n(\F_2)$ is almost $n$-torsion clean if and only if $n\leq k_1(n)$. However, $n\leq k_1$ and $k_1\leq n$. So, $n=k_1(n)$. But such integers $n$ are, with the aid of Lemma~\ref{mEqualsk1}, only $2^l$ with $l\in \mathbb{N}\setminus \{0,1\}$. Finally, $\T_n(\F_2)$ is almost $n$-torsion clean if and only if $n=2^l, l\in \mathbb{N}\setminus \{0,1\}$, as claimed.
\end{proof}

The following lemma somewhat restate Lemma~\ref{TnAlmostMTorsionC} in a more convenient for us form of $m$-torsion clean rings like this:

\begin{lem}\label{LemTnMTorsionC} Let $n>2$ and $m>2$ be two integers. Then $\T_n(\F_2)$ is $m$-torsion clean if, and only if, the following two points hold:
\begin{enumerate}
\item $n\leq k_1(m)$.

\medskip

\item $n>k_1(u)$ for every integer $u\in \{2,3,\dots, m-1\}$.
\end{enumerate}
\end{lem}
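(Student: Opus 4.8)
The plan is to unwind the definition of $m$-torsion cleanness into two separate demands and route each through Lemma~\ref{TnAlmostMTorsionC}. By definition, $\T_n(\F_2)$ is $m$-torsion clean exactly when it is almost $m$-torsion clean \emph{and} $m$ is the least positive integer admitting the torsion-clean decomposition for every element; equivalently, $\T_n(\F_2)$ is almost $m$-torsion clean but is \emph{not} almost $u$-torsion clean for any $u$ with $1\leq u<m$. The value $u=1$ is vacuous here, since almost $1$-torsion cleanness would make every element of the shape $I_n+E$ with $E$ idempotent, which already breaks down for $n>2$; so only the window $2\leq u\leq m-1$ requires genuine control, and this is precisely the window appearing in item (2).

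For the forward implication I would argue: if $\T_n(\F_2)$ is $m$-torsion clean then it is in particular almost $m$-torsion clean, so Lemma~\ref{TnAlmostMTorsionC} delivers item (1), i.e. $n\leq k_1(m)$. Minimality of $m$ then says that $\T_n(\F_2)$ fails to be almost $u$-torsion clean for each $u\in\{3,\dots,m-1\}$, and the contrapositive of Lemma~\ref{TnAlmostMTorsionC} rewrites each such failure as $n>k_1(u)$, yielding the bulk of item (2). The converse is the same chain read backwards: item (1) together with Lemma~\ref{TnAlmostMTorsionC} supplies almost $m$-torsion cleanness, while every inequality $n>k_1(u)$ in item (2) forces, again by Lemma~\ref{TnAlmostMTorsionC}, that $\T_n(\F_2)$ is not almost $u$-torsion clean for $3\leq u\leq m-1$; hence $m$ is minimal and the ring is $m$-torsion clean.

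The one step I expect to need separate care is the boundary value $u=2$, which item (2) includes but which sits outside the standing hypothesis $m>2$ of Notation~\ref{k} and of Lemma~\ref{TnAlmostMTorsionC}. I would settle it by hand. Directly from the definition of $k_1$ one has $\binom{2}{1}=2$ and $\binom{2}{2}=1$, so $k_1(2)=2$, and therefore the $u=2$ instance of item (2) reads simply $n>2$, which is our standing hypothesis and is automatic. It then remains to confirm that this inequality truly encodes the failure of almost $2$-torsion cleanness for $n>2$, and for this I would rerun the left-to-right part of Lemma~\ref{TnAlmostMTorsionC} with $m=2$: writing any torsion unit as $U=I_n+N$ with $N$ nilpotent, the characteristic-$2$ identity $U^2=I_n+N^2$ shows that $U^2=I_n$ forces $N^2=O_n$ for the nilpotent part of every clean decomposition, whereupon Lemma~\ref{nilpIndexN} gives $n\leq k_1(2)=2$, contradicting $n>2$. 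Thus $\T_n(\F_2)$ is never almost $2$-torsion clean once $n>2$, so the boundary case is fully consistent with item (2), and both implications are complete.
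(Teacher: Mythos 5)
Your proof is correct and takes essentially the same route as the paper, whose entire argument is the one-line remark that the lemma follows from Lemma~\ref{TnAlmostMTorsionC} combined with the definition of $m$-torsion cleanness. The only addition is your explicit treatment of the boundary values $u=1$ and $u=2$ (where $k_1$ falls outside the standing hypothesis of Notation~\ref{k}), which the paper leaves implicit but which you resolve correctly.
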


\begin{proof} We just use Lemma~\ref{TnAlmostMTorsionC} accomplished with the definition of $m$-torsion cleanness.
\end{proof}

Arguing as above, we continue with a more precise description of $m$-torsion cleanness of triangular matrix rings.

\begin{lem}\label{mGEQn} Let $n>2$ and $m>2$ be two integers. If $\T_n(\F_2)$ is $m$-torsion clean, then $m\geq n$.
\end{lem}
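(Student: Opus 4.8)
The plan is to deduce this statement as an immediate consequence of the characterization already recorded in Lemma~\ref{LemTnMTorsionC}, rather than to reprove anything about triangular matrices directly. Suppose $\T_n(\F_2)$ is $m$-torsion clean. Then, in particular, item (1) of Lemma~\ref{LemTnMTorsionC} applies and yields the inequality $n\leq k_1(m)$. So the whole task reduces to producing an upper bound on $k_1(m)$ in terms of $m$; once I have $k_1(m)\leq m$, the desired conclusion $m\geq n$ follows by transitivity.

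First I would invoke the defining property of $k_1(m)$ from Notation~\ref{k}: it is the smallest index $k\in\{1,2,\dots,m\}$ for which the binomial coefficient $\binom{m}{k}$ is odd. The key elementary observation is that $\binom{m}{m}=1$ is odd, so the value $k=m$ always lies in the set over which the minimum is taken. Consequently the minimum cannot exceed $m$, that is, $k_1(m)\leq m$. Chaining this with the inequality obtained from the torsion-clean hypothesis gives $n\leq k_1(m)\leq m$, and therefore $m\geq n$, which is exactly the claimed assertion.

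There is essentially no genuine obstacle in this argument, since all the structural work has been absorbed into the preceding lemmas. One may equivalently route the proof through Lemma~\ref{TnAlmostMTorsionC}, noting that an $m$-torsion clean ring is in particular almost $m$-torsion clean, which again forces $n\leq k_1(m)$; either phrasing isolates the single nontrivial-looking ingredient, the bound $k_1(m)\leq m$, which is itself immediate from the definition. I would close by remarking that this lemma is precisely the input needed to rule out small exponents $m<n$ when later pinning down the exact torsion-clean index of $\T_n(\F_2)$.
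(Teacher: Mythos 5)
Your proof is correct and follows essentially the same route as the paper: both rest on item (1) of Lemma~\ref{LemTnMTorsionC} together with the trivial bound $k_1(m)\leq m$, the only cosmetic difference being that the paper phrases the argument as a proof by contradiction while you chain the inequalities $n\leq k_1(m)\leq m$ directly.
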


\begin{proof} Assume in a way of contradiction $m<n$. Since $k_1(m)\leq m$, we have that $k_1(m)<n$, which contradicts $(1)$ from Lemma~\ref{LemTnMTorsionC}, as expected.
\end{proof}

The next lemma somewhat strengthens the previous one.

\begin{lem} Let $n>2$ and $m>2$ be two integers. If $\T_n(\F_2)$ is $m$-torsion clean, then $m$ is even.
\end{lem}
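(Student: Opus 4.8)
The plan is to argue by contradiction, assuming that $\T_n(\F_2)$ is $m$-torsion clean with $m$ odd, and then to exploit the characterization from Lemma~\ref{LemTnMTorsionC} together with the parity information about $k_1$ supplied by Lemma~\ref{k1PowerOf2}. By Lemma~\ref{mGEQn} we already know $m \geq n > 2$, so $m$ is an odd integer strictly greater than $2$. The first step is to invoke Lemma~\ref{k1PowerOf2}: since $m$ is odd, we have $k_1(m) = 2^0 = 1$. Condition $(1)$ of Lemma~\ref{LemTnMTorsionC} then reads $n \leq k_1(m) = 1$, which flatly contradicts the standing hypothesis $n > 2$. This immediately yields the claim that $m$ must be even.

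In slightly more detail, the key observation I would isolate is that Lemma~\ref{k1PowerOf2} pins $k_1$ down by the parity of its argument: $k_1$ of an odd number is always $1$, whereas $k_1$ of an even number is a power of $2$ that is at least $2$. So the $m$-torsion cleanness of $\T_n(\F_2)$, through requirement $(1)$ that $n \leq k_1(m)$, forces $k_1(m)$ to be large (at least $n \geq 3$), and an odd $m$ simply cannot deliver such a large value of $k_1$. The entire argument is therefore a one-line deduction once Lemma~\ref{k1PowerOf2} is in hand, and I would present it exactly that way.

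There is no genuine obstacle here; the real work was already done in establishing Lemma~\ref{k1PowerOf2}. The only point demanding a modicum of care is to confirm that the hypotheses of the cited lemmas are met, namely that $m > 2$ (so that $k_1(m)$ and Lemma~\ref{k1PowerOf2} are applicable) and that $n > 2$ (so that the contradiction $n \leq 1$ is genuine). Both are granted by the statement's assumptions and by Lemma~\ref{mGEQn}. I would write the proof as follows: suppose for contradiction that $m$ is odd. Since $m > 2$, Lemma~\ref{k1PowerOf2} gives $k_1(m) = 1$. But Lemma~\ref{LemTnMTorsionC}$(1)$ requires $n \leq k_1(m) = 1$, contradicting $n > 2$. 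Hence $m$ is even, as claimed.
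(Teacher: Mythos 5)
Your proof is correct and follows essentially the same route as the paper's: both arguments rest on the observation that condition (1) of Lemma~\ref{LemTnMTorsionC} forces $k_1(m)\geq n>2$, while an odd $m$ would give $k_1(m)=1$ (you cite Lemma~\ref{k1PowerOf2} for this; the paper uses the same fact, which amounts to $\binom{m}{1}=m$ being odd). No gap; your write-up is if anything slightly more explicit about which lemma supplies the parity fact.
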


\begin{proof} We use the inequality $k_1(m)\geq n>2$ established above. So, $k_1(m)$ cannot be $1$, hence $n$ cannot be odd. Thus $m$ is necessarily even, as promised.
\end{proof}

We now have all the ingredients necessary to arrive at our other basic achievement of this subsection.

\begin{thm}\label{thmTnMTorsionC} Let $n>2$ and $m>2$ be two integers and let $t\geq 1$ be an other integer such that $2^t<n\leq 2^{t+1}$. Then $\T_n(\F_2)$ is $m$-torsion clean if, and only if, $m=2^{t+1}$.
\end{thm}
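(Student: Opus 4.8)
The plan is to feed the combinatorial characterization of Lemma~\ref{LemTnMTorsionC} directly into the explicit formula of Lemma~\ref{k1PowerOf2}. Recall that Lemma~\ref{LemTnMTorsionC} asserts that $\T_n(\F_2)$ is $m$-torsion clean precisely when (1) $n\le k_1(m)$ and (2) $n>k_1(u)$ for every $u\in\{2,3,\dots,m-1\}$, while Lemma~\ref{k1PowerOf2} tells us that $k_1(u)$ is always a power of $2$: writing $u=2^{w}\cdot s$ with $s$ odd, one has $k_1(u)=2^{w}$ (and $k_1(u)=1$ when $u$ is odd). Thus both conditions become assertions purely about the exponent of $2$ dividing the relevant integer, which is exactly what makes the dyadic hypothesis $2^t<n\le 2^{t+1}$ so convenient to exploit.

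First I would translate condition (1). Since $k_1(m)$ is a power of $2$ and $n>2^t$, the inequality $n\le k_1(m)$ can hold only if $k_1(m)\ge 2^{t+1}$; conversely, if $k_1(m)\ge 2^{t+1}$ then $k_1(m)\ge 2^{t+1}\ge n$ by the upper bound $n\le 2^{t+1}$. Hence condition (1) is equivalent to $k_1(m)\ge 2^{t+1}$, that is, to $2^{t+1}\mid m$ (the exponent of $2$ in $m$ being at least $t+1$).

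Next I would translate condition (2). For a fixed $u$, combining $n>2^t$ with $n\le 2^{t+1}$ shows that $k_1(u)=2^{w}\ge n$ holds exactly when $w\ge t+1$, i.e.\ when $2^{t+1}\mid u$; in all other cases $k_1(u)\le 2^{t}<n$. Therefore the requirement $n>k_1(u)$ fails for some $u\in\{2,\dots,m-1\}$ if and only if some multiple of $2^{t+1}$ lies in that range. Since the smallest positive multiple of $2^{t+1}$ is $2^{t+1}$ itself, condition (2) holds if and only if $2^{t+1}>m-1$, i.e.\ $m\le 2^{t+1}$.

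Combining the two translations, $\T_n(\F_2)$ is $m$-torsion clean if and only if $2^{t+1}\mid m$ and $m\le 2^{t+1}$, and the unique positive integer meeting both constraints is $m=2^{t+1}$ (which exceeds $2$ since $t\ge 1$). I expect the only delicate point to be the two boundary equivalences, namely ``$n\le k_1(m)\Leftrightarrow 2^{t+1}\mid m$'' and ``$n\le k_1(u)\Leftrightarrow 2^{t+1}\mid u$'', both of which rest on the fact that $n$ sits in the half-open dyadic block $(2^t,2^{t+1}]$; once these are pinned down with care to the strict-versus-non-strict inequalities, no genuine obstacle remains, and the conclusion also recovers consistency with Lemma~\ref{mGEQn} (as $m=2^{t+1}\ge n$) and with the parity lemma (as $m$ is even).
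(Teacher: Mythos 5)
Your proof is correct, and it reaches the conclusion by a noticeably more direct route than the paper. Both arguments start from Lemma~\ref{LemTnMTorsionC}, but the paper's forward direction is a proof by contradiction in several stages: it first rules out $m$ not being a power of $2$ via Lemma~\ref{mEqualsk1} (the inequality $\frac{m}{2}\geq k_1(m)$), then pins down the exponent by invoking Lemma~\ref{mGEQn} to exclude $t'\leq t$ and condition $(2)$ to exclude $t'>t+1$; its converse uses Lemma~\ref{k1PowerOf2} only qualitatively. You instead use the sharp form of Lemma~\ref{k1PowerOf2} --- that $k_1(u)=2^{w}$ where $2^{w}$ is exactly the $2$-part of $u$ --- to convert both conditions of Lemma~\ref{LemTnMTorsionC} into divisibility statements ($2^{t+1}\mid m$ and $m\leq 2^{t+1}$) in one pass, which handles both implications simultaneously and dispenses with Lemmas~\ref{mEqualsk1} and~\ref{mGEQn} entirely. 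The dyadic translations you flag as the delicate points are sound: a power of $2$ is $\geq n$ iff it is $\geq 2^{t+1}$ precisely because $n$ lies in $(2^t,2^{t+1}]$. The one caveat worth recording is that the \emph{statement} of Lemma~\ref{k1PowerOf2} only asserts that $k_1(m)$ is \emph{some} power of $2$ (positive exponent for $m$ even, zero for $m$ odd), whereas you need the identification of that power with $2^{v}$ where $2^{v}\,\|\,m$; this stronger fact is exactly what the lemma's proof establishes (it writes $m=2^{w}t$ with $t$ odd and shows $k_1(m)=2^{w}$), so your reliance on it is legitimate, but you should cite it in that sharpened form or note that the weaker statement plus $k_1(u)\leq u$ and Lemma~\ref{mEqualsk1} would also suffice.
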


\begin{proof} Let $n>2$ and $m>2$ be two integers, $t\geq 1$ an other integer such that $2^t<n\leq 2^{t+1}$ and $\T_n(\F_2)$ is $m$-torsion clean. Applying Lemma~\ref{LemTnMTorsionC}, we obtain that
\begin{enumerate}
\item $n\leq k_1(m)$;

\medskip

\item $n>k_1(u)$ for every integer $u\in \{2,3,\dots, m-1\}$.

So, by the usage of points $(1)$ and $(2)$, it follows that $k_1(u)<k_1(m)$ for every integer $u\in \{2,3,\dots, m-1\}$.

Assuming now that $m$ is not a power of $2$, then there exists a positive integer $t'$ such that $2^{t'}<m<2^{t'+1}$. Since $2^{t'+1}>m$, it follows that $2^{t'}>\frac{m}{2}$. But $\frac{m}{2}\geq k_1(m)$, because $m$ is not a power of $2$ according to Lemma~\ref{mEqualsk1}. Hence, for $u=2^{t'}<m$, we have owing to Lemma~\ref{mEqualsk1} that $\frac{m}{2}\geq k_1(m)$, and so $k_1(u)=u>\frac{m}{2}\geq k_1(m)$, which is a contradiction. Therefore, $m=2^{t'},$ $t'\in \mathbb{N} \setminus \{0,1\}$. If, however, $t'\leq t$, then $m=2^{t'}\leq 2^t<n$ which contradicts Lemma~\ref{mGEQn}. Assume $t'>t+1$. We have $n\leq 2^{t+1}<2^{t'}=m$ and since $k_1(2^{t+1})=2^{t+1}$, one obtains that $n\leq k_1(2^{t+1})$. Hence for $u=2^{t+1}<m$ we derive that $k_1(u)\geq n$, contradicting point (2).

Conversely, letting $n>2$ and $m>2$ be two integers, $t\geq 1$ an other integer such that $2^t<n\leq 2^{t+1}$ and $m=2^{t+1}$, it follows by $m=2^{t+1}$ that $k_1(m)=m\geq n$, so $n\leq k_1(m)$. Let us now $u\in \{2,3,\dots, m-1\}$. Assume $k_1(u)>2^t$. By application of Lemma~\ref{k1PowerOf2}, there exists a positive integer $d_u$ such that $k_1(u)=2^{t+d_u}\geq 2^{t+1}=m$. So, $k_1(u)\geq m>u$, which is demonstrably false as by definition $k_1(u)\leq u$. Consequently, $k_1(u)\leq 2^t<n$, whence $k_1(u)<n$. In conclusion, by Lemma~\ref{LemTnMTorsionC}, we have that $\T_n(\F_2)$ is $m$-torsion clean, as required.
\end{enumerate}
\end{proof}

The next example concretes somewhat the computations given above (compare with the proof of Theorem~\ref{mring}).

\begin{ex}
\begin{enumerate}
\item $\T_3(\F_2)$ is $4$-torsion clean.

\medskip

\item $\T_3(\F_2)$ is not $28$-torsion clean.
\end{enumerate}
\end{ex}

\begin{proof} It follows at once by the usage of Theorem~\ref{thmTnMTorsionC} that $\T_3(\F_2)$ is $m$-torsion clean if and only if $m=4$, substantiating both claims.
\end{proof}

One finally gets that the following statement is valid:

\begin{thm}\label{thmTnNTorsionC} Let $n>2$ be an integer. Then $\T_n(\F_2)$ is $n$-torsion clean if, and only if, $n=2^l, l\in \mathbb{N}\setminus \{0,1\}$.
\end{thm}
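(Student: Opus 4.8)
The plan is to deduce this statement directly from Theorem~\ref{thmTnMTorsionC} by the simple specialization $m=n$. Indeed, Theorem~\ref{thmTnMTorsionC} gives, for each fixed $n>2$, a complete description of the exponents $m>2$ for which $\T_n(\F_2)$ is $m$-torsion clean: writing $t\geq 1$ for the unique integer with $2^t<n\leq 2^{t+1}$, the ring $\T_n(\F_2)$ is $m$-torsion clean precisely when $m=2^{t+1}$. Thus being $n$-torsion clean amounts to asking that this single admissible exponent $2^{t+1}$ coincide with $n$ itself, and the whole statement reduces to analyzing the equation $n=2^{t+1}$.

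For the forward implication, I would assume that $\T_n(\F_2)$ is $n$-torsion clean. Since $n>2$, the exponent $m=n$ satisfies $m>2$, so Theorem~\ref{thmTnMTorsionC} applies. Let $t\geq 1$ be the integer determined by $2^t<n\leq 2^{t+1}$; such a $t$ exists because $n>2=2^1$. The theorem then forces $n=2^{t+1}$, so $n$ is a power of $2$; more precisely, $n=2^l$ with $l=t+1\geq 2$, that is, $l\in\mathbb{N}\setminus\{0,1\}$.

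For the converse, I would start from $n=2^l$ with $l\in\mathbb{N}\setminus\{0,1\}$ and set $t=l-1\geq 1$. Then $2^t=2^{l-1}<2^l=n$ and $n=2^l=2^{t+1}$, so the hypothesis $2^t<n\leq 2^{t+1}$ of Theorem~\ref{thmTnMTorsionC} is met (with equality on the right end). The theorem yields that $\T_n(\F_2)$ is $m$-torsion clean exactly when $m=2^{t+1}=n$; in particular it is $n$-torsion clean, as desired.

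No genuine obstacle remains at this stage, since all the combinatorial work has already been carried out in Lemma~\ref{LemTnMTorsionC} and Theorem~\ref{thmTnMTorsionC}. The only point requiring a little care is the elementary bookkeeping with the dyadic interval: one must check that for every $n>2$ there is a unique $t\geq 1$ with $2^t<n\leq 2^{t+1}$, and that the resulting equality $n=2^{t+1}$ is equivalent to $n$ being a power of $2$ with exponent at least $2$. Both are routine, so the proof is essentially a one-line corollary of Theorem~\ref{thmTnMTorsionC}.
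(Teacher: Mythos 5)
Your proposal is correct and follows exactly the paper's own route: the paper likewise proves this theorem by setting $m=n$ in Theorem~\ref{thmTnMTorsionC}, and your additional bookkeeping with the dyadic interval $2^t<n\leq 2^{t+1}$ is just a careful spelling-out of that one-line specialization.
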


\begin{proof} If we set $m=n$ in Theorem~\ref{thmTnMTorsionC}, we conclude that $\T_n(\F_2)$ is $n$-torsion clean if and only if $n=2^l, l\in \mathbb{N}\setminus \{0,1\}$.
\end{proof}

Comparing Theorems~\ref{TnAlmostNTorsionC} and \ref{thmTnNTorsionC}, one deduces the following rather curious consequence.

\begin{cor} Suppose $n\geq 3$ is an integer. Then $\T_n(\F_2)$ is $n$-torsion clean if, and only if, $\T_n(\F_2)$ is almost $n$-torsion clean.
\end{cor}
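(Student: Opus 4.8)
The plan is to deduce the corollary directly by juxtaposing the two characterizations already obtained in Theorems~\ref{TnAlmostNTorsionC} and \ref{thmTnNTorsionC}, since both pin down exactly the same family of indices $n$.

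First I would record the trivial implication: if $\T_n(\F_2)$ is $n$-torsion clean, then it is in particular almost $n$-torsion clean. This follows immediately from the definitions, because $n$-torsion cleanness demands the very decomposition $r=u+e$ with $u^n=1$ that almost $n$-torsion cleanness requires, merely adding the extra minimality constraint on the exponent $n$; dropping that constraint leaves the almost-version intact. So no work is needed for the "only if" part, and this is the easy half of the equivalence.

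For the genuinely interesting direction, the key observation is that both properties have already been shown to be equivalent to one and the same arithmetic condition on $n$. Concretely, Theorem~\ref{TnAlmostNTorsionC} asserts that $\T_n(\F_2)$ is almost $n$-torsion clean precisely when $n=2^l$ for some $l\in\mathbb{N}\setminus\{0,1\}$, while Theorem~\ref{thmTnNTorsionC} asserts that $\T_n(\F_2)$ is $n$-torsion clean precisely under the identical condition $n=2^l$, $l\in\mathbb{N}\setminus\{0,1\}$. Since the two characterizing sets coincide, for any fixed $n\geq 3$ the statement "$\T_n(\F_2)$ is $n$-torsion clean" and the statement "$\T_n(\F_2)$ is almost $n$-torsion clean" are simultaneously true (exactly when $n$ is such a power of $2$) or simultaneously false (otherwise). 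This yields the claimed equivalence at once.

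There is essentially no obstacle here beyond invoking the two prior theorems; the whole mathematical substance is absorbed into them, and ultimately into the combinatorial analysis of $k_1(m)$ carried out in Lemmas~\ref{TnAlmostMTorsionC}--\ref{k1PowerOf2}. The only point that warrants a moment's care is matching the index ranges: both theorems are stated under the hypothesis $n>2$, which is exactly the hypothesis $n\geq 3$ of the corollary, so the comparison is valid across the entire range with no boundary case to handle separately.
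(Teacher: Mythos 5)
Your proposal is correct and matches the paper's own derivation: the corollary is obtained exactly by comparing Theorems~\ref{TnAlmostNTorsionC} and \ref{thmTnNTorsionC}, both of which characterize the respective property by the same condition $n=2^l$ with $l\in\mathbb{N}\setminus\{0,1\}$. Your additional remark on the trivial ``only if'' direction is harmless but not needed, since the coincidence of the two characterizing sets already gives both implications.
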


The next comments could be of some interest and importance:

\begin{rem}\label{remAnotherWay} Another proof of Theorem~\ref{thmTnNTorsionC} may be drawn as follows: If we put $m=n$ in Lemma~\ref{LemTnMTorsionC}, we shall obtain that $\T_n(\F_2)$ is $n$-torsion clean if and only if the following two statements are fulfilled:
\begin{enumerate}
\item $n\leq k_1(n)$.

\medskip

\item $n>k_1(u)$ for every integer $u\in \{2,3,\dots, n-1\}$.
\end{enumerate}
Notice that the second statement is always true since $n>u\geq k_1(u)$, while the first one is true if and only if $n=k_1(n)$ if and only if $n=2^l, l\in \mathbb{N}\setminus \{0,1\}$. So, in conclusion, $\T_n(\F_2)$ is $n$-torsion clean if and only if $n=2^l, l\in \mathbb{N}\setminus \{0,1\}$.
\end{rem}

The subsequent commentaries below are worthwhile in a way to describe more completely all $n$-torsion clean matrices.

\begin{rem} In regard to all the work done so far, let us notice that, if $\mathbb{M}_n(\F_2)$ is almost $n$-torsion clean, then there exists a matrix $A\in \mathbb{M}_n(\F_2)$ such that the rank of the idempotent in the almost $n$-torsion clean decomposition of $A$ lies in $\{1,2,\dots, n-1\}$. Assuming the contrary, the rank of the idempotent in any almost $n$-torsion clean decomposition of every $A\in \mathbb{M}_n(\F_2)$ has rank $0$ or $n$. Thus, $A^n=I_n$ or $(A+I_n)^n=1$. Then, $det(A)$ and $det(A+I_n)$ are, surely, both not zeros. Therefore, $0$ and $1$ are not eigenvalues of $A$, for every $A\in \mathbb{M}_n(\F_2)$, which is false. So, there exists an almost $n$-torsion clean decomposition having rank of the idempotent not belonging to $\{0,n\}$, and thus we will work resultantly with Proposition~\ref{comp}. Besides, it is unknown yet whether or {\it not} from Theorem~\ref{mring} can be deduced a criterion only in terms of the natural number $n$ for $\mathbb{M}_n(\F_2)$ to be almost $n$-torsion clean.

Concerning the case $n=2$, it was proved in \cite[Examples 2.3, 2.5]{DM} that $\T_2(\F_2)$ and $\M_2(\F_2)$ are both $2$-torsion clean.  Moreover, \cite[Example 2.7]{DM} accomplished with the discussion after Question 1 from there demonstrate that $\M_3(\F_2)$ is $3$-torsion clean as well as that $\M_4(\F_2)$ is $4$-torsion clean. These two facts are also immediate consequences of our Theorem~\ref{mring} alluded to above.
\end{rem}

We close with two left-open questions of some interest and importance. The first one is a common generalization to the already obtained results.

\begin{prob}\label{quest1} Determine those natural numbers $n$ for which $\mathbb{M}_n(\F_{2^n})$ is $n$-torsion clean, respectively almost $n$-torsion clean, by finding a necessary and sufficient condition.
\end{prob}

Now, mimicking \cite{D}, we will say that the element $r$ of a ring $R$ is {\it weakly $n$-torsion clean decomposed} if $r=u+e$ or $r=u-e$, where $u\in R$ with $u^n=1$ for some $n\geq 1$ and $e\in R$ with $e^2=e$. So, a ring $R$ is called {\it weakly $n$-torsion clean} if there is $n\in \N$ such that every element of $R$ has a weakly $n$-torsion clean decomposition and $n$ is the minimal possible natural in these two equalities. Without the limitation on minimality, $R$ is just called {\it almost weakly $n$-torsion clean}.

For instance, one deduces that $\mathbb{Z}_7=\mathbb{F}_7$ is both $6$-torsion clean and weakly $6$-torsion clean. In general, if $R$ is a ring with only trivial idempotents, the natural $n$ should be even in the case of $n$-torsion clean rings, since $0=(-1)+1$ is the unique presentation. In the weak case we may, however, have that $0=1-1$, so that things differ each to other.

Concerning $\mathbb{Z}_8$, it is both $2$-torsion clean and weakly $2$-torsion clean. However, $\mathbb{Z}_{10}$ being isomorphic to $\mathbb{Z}_2 \times \mathbb{Z}_5$ is $4$-torsion clean but weakly $2$-torsion clean.

\medskip

In that way, we end our work with the following challenging problem which is relevant to the discussion above by considering the more complicated version of (almost) weak $n$-torsion cleanness.

\begin{prob}\label{quest2} Let $R$ be a ring. Describe explicitly those naturals $n$ for which $\mathbb{M}_n(R)$, respectively $\mathbb{T}_n(R)$, is (almost) weakly $n$-torsion clean.
\end{prob}

The concrete examination of that query could begin by considering the three elements field $\F_3=\mathbb{Z}_3$.

\vskip3pc

\end{document}